\newtheorem{theorem}{Theorem}[section]
\newtheorem{definition}{Definition}[section]
\newtheorem{remark}{Remark}[section]
\newtheorem{proposition}{Proposition}[section]
\newtheorem{corollary}{Corollary}[section]
\numberwithin{equation}{section}
\newdimen\bibspace
\renewenvironment{thebibliography}[1]{%
 \section*{\refname 
       \@mkboth{\MakeUppercase\refname}{\MakeUppercase\refname}}%
     \list{\@biblabel{\@arabic\c@enumiv}}%
          {\settowidth\labelwidth{\@biblabel{#1}}%
           \leftmargin\labelwidth
           \advance\leftmargin\labelsep
           \itemsep\bibspace
           \parsep\z@skip     %
           \@openbib@code
           \usecounter{enumiv}%
           \let\p@enumiv\@empty
           \renewcommand\theenumiv{\@arabic\c@enumiv}}%
     \sloppy\clubpenalty4000\widowpenalty4000%
     \sfcode`\.\@m}
    {\def\@noitemerr
      {\@latex@warning{Empty `thebibliography' environment}}%
     \endlist}
\newcommand{\va}{\varepsilon}           \newcommand{\ud}{\mathrm{d}}
\newcommand{\be}{\begin{equation}}      \newcommand{\ee}{\end{equation}}
            \newcommand{\si}{\sigma}
         \newcommand{\Sn}{\mathbb{S}^n}
       \newcommand{\rn}{\mathbb{R}^n}
\newcommand{\wdt}{\ensuremath{\widetilde}}
\def\Xint#1{\mathchoice
	{\XXint\displaystyle\textstyle{#1}}%
	{\XXint\textstyle\scriptstyle{#1}}%
	{\XXint\scriptstyle\scriptscriptstyle{#1}}%
	{\XXint\scriptscriptstyle\scriptscriptstyle{#1}}%
	\!\int}
\def\XXint#1#2#3{{\setbox0=\hbox{$#1{#2#3}{\int}$ }
		\vcenter{\hbox{$#2#3$ }}\kern-.6\wd0}}
\def\dashint{\Xint-}
\begin{document}
	
	\title{ Unified results of  compactness and existence for
		prescribing fractional $Q$-curvatures problem	}
	\author{ {\sc Yan Li}\,, {\sc Zhongwei Tang}\thanks{The research was supported by National Science Foundation of China(12071036,12126306)}\,, {\sc Heming Wang} and {\sc Ning Zhou} \\
		\small School of Mathematical Sciences, \\
		\small Laboratory of Mathematics and Complex Systems, MOE,\\
		\small Beijing Normal University, Beijing, 100875, P.R. of China}
	
	\date{}

	\maketitle

		\begin{abstract}
			In this paper we study the problem of prescribing fractional $Q$-curvature of order $2\si$ for a conformal metric on the standard sphere $\Sn$ with $\si\in (0,n/2)$ and $n\geq2$. Compactness and existence results are obtained in terms of the flatness order $\beta$  of the prescribed curvature function $K$. Making use  of integral representations and perturbation result, we develop a unified approach to obtain these results when $\beta\in [n-2\si,n)$ for all $\si\in (0,n/2)$.    This work generalizes the corresponding results of Jin-Li-Xiong		[Math. Ann. 369: 109--151, 2017] for $\beta\in (n-2\si,n)$.
		\end{abstract}

	{\noindent \bf Key words:} Prescribing fractional $Q$-curvatures problem, Blow-up analysis, Existence and compactness.

	{\noindent\bf Mathematics Subject Classification (2020)}\quad 35R09,35B44,35J35
	\section{ Introduction} 
	
	Let $(\Sn, g_{0})$ be the standard sphere in $\mathbb{R}^{n+1}$.
	The prescribing fractional $Q$-curvature problem
	of order $2\si $ on $\Sn $
	can be described as:
	which function $K$ on $\Sn $ is the fractional $Q$-curvature of a
	metric $g$ on $\Sn $  conformally equivalent to $g_0?$
	If we denote $g=v^{4/(n-2\si )}g_{0},$
	this problem can be represented  as finding the solution of the following
	nonlinear equation with critical exponent:
	\be\label{main-eq}
	P_{\si }(v)=c(n, \si )
	K v^{\frac{n+2 \si }{n-2 \si }} \quad \text { on }\, \Sn ,
	\ee
	where $n\geq 2,$ $0<\si <n/2,$
	$c(n,\si )=\Gamma(\frac{n}{2}+\si )/\Gamma(\frac{n}{2}-\si ),$
	$\Gamma$ is the Gamma function,  $K$ is a function defined on $\Sn,$
and 	$P_{\si }$ is an intertwining operator of   $2\si $-order:
	$$
	P_{\si }=\frac{\Gamma(B+\frac{1}{2}+\si )}{\Gamma(B+\frac{1}{2}-\si )}, \quad B=\sqrt{-\Delta_{g_{0}}
		+\Big(\frac{n-1}{2}\Big)^{2}},
	$$
where $\Delta_{g_{0}}$ is the Laplace-Beltrami operator on $(\Sn, g_{0})$.
	The operator $P_{\si}$ can be viewed as the pull back operator of the fractional Laplacian $(-\Delta)^{\si }$ on $\mathbb{R}^{n}$
via the stereographic projection:
$$
(P_{\si }(v)) \circ F=|J_{F}|^{-\frac{n+2 \si }{2 n}}(-\Delta)^{\si }(|J_{F}|^{\frac{n-2 \si }{2 n}}(v \circ F)) \quad \text { for } \, v \in C^{2\si }(\Sn ),
$$
where $F$ is the inverse of the stereographic projection and $|J_F|=(\frac{2}{1+|x|^2})^n$
is the determinant of the Jacobian of $F$.
In addition, the Green's function of $P_{\si }$ is the spherical
Riesz potential, i.e.,
\be\label{gr}
P_{\si }^{-1}f(\xi)=c_{n,\si }
\int_{\Sn }\frac{f(\zeta)}{|\xi-\zeta|^{n-2\si }}\,\ud \mathrm{v o l}_{g_{0}}(\zeta)
\quad \text{ for }\, f\in L^{p}(\Sn ),
\ee
where $c_{n,\si }
=\frac{\Gamma(\frac{n-2\si }{2})}{2^{2\si }\pi^{n/2}\Gamma(\si )}$,
$p>1,$ and $|\cdot|$ is the Euclidean distance in $\mathbb{R}^{n+1}$.

Eq. \eqref{main-eq} has a variational structure and involves critical exponent because of the Sobolev embeddings. A natural function space for finding solutions is
 $H^\si(\Sn)$,  the $\si$-order fractional Sobolev space that consists of all functions $v \in L^2(\Sn)$ such that $(1-\Delta_{g_0})^{\si / 2} v \in L^2(\Sn)$, with the norm $\|v\|_{H^\si(\Sn)}:=\|(1-\Delta_{g_0})^{\si / 2} v \|_{L^2(\Sn)}$. The sharp Sobolev inequality on $\Sn$ (see Beckner \cite{bw}) asserts that
\begin{align}\label{sobolev}
\Big(\dashint_{\Sn }|v|^{\frac{2 n}{n-2 \si}} \,\ud  \mathrm{v o l}_{g_{0}}\Big)^{\frac{n-2 \si}{n}} \leq \frac{\Gamma(\frac{n}{2}-\si)}{\Gamma(\frac{n}{2}+\si)} \dashint_{\Sn } v P_\si(v) \,\ud  \mathrm{v o l}_{g_{0}} \quad \text { for }\, v \in H^\si(\Sn).
\end{align}
Due to the non-compactness of the embedding of $H^\si(\Sn)$ into $L^{2n/(n-2\si)}(\Sn)$,
the Euler functional associated to \eqref{main-eq} does not satisfy the Palais-Smale condition, which leads to the failure of the standard critical point theory. Moreover, beside the obvious necessary condition that $K$ be positive somewhere, there are topological obstructions of Kazdan-Warner type to  solve \eqref{main-eq} (see \cite{Xu,jlxm}).

Problem  \eqref{main-eq}  can be seen as the generalization of the classical Nirenberg problem: which function $K$ on $\Sn$ is the scalar curvature of a metric conformal to the standard one? This is equivalent to solving 	\begin{align*}
P_{1}w+1=-\Delta_{g_{0}} w+1=K e^{w} \quad \text { on }\, \mathbb{S}^{2},
\end{align*}
and
\be\label{eq:Nirenberg}
P_1v=-\Delta_{g_{0}} v+\frac{n(n-2)}{4}  v=\frac{n-2}{4(n-1)} K v^{\frac{n+2}{n-2}} \quad \text { on }\, \Sn ,~n \geq 3,
\ee
where $g=e^{2w}g_{0}$ and  $v=e^{\frac{n-2}{4} w}$. There has been vast literature on the Nirenberg problem and related ones and it would be
impossible to mention here all works in this area. One significant aspect most directly related to this paper is the fine analysis of blow-up  solutions or the compactness of
the solution set. These were studied in \cite{LPrescribing1995, LPrescribing1996,JLXOn2014,HanPrescribing1990,jlxm,ChangYangPrescribing1987,ChangGurskyYang1993,BahriCoronThe1991,SZPrescribed1996}. For more recent and further studies, see our  work  \cite{LTZOn2022,LTZCompactness2022} and related references therein.
	
Another stimulating situation is the study of higher orders and fractional order conformally invariant pseudo-differential operators $P_k^{g_{0}}$ on $(\Sn, g_0)$, which exist for all positive integers $k$ if $n$ is odd and for $k=\{1, \ldots, {n}/{2}\}$ if $n$ is even.  These
	operators defined on Riemannian manifolds have also been studied.
	For any Riemannian manifold $(M,g)$,
	let $R_{g}$ be the scalar curvature of $(M,g),$
	and the conformal Laplacian be defined as $P_{1}^{g}=-\Delta_{g}+\frac{n-2}{4(n-1)}R_{g}.$
	The Paneitz operator $P_{2}^{g}$ is
	another conformal invariant operator, which was discovered by Paneitz \cite{Pa}.
	Graham et al. \cite{GJMS} constructed a sequence of conformally
	covariant elliptic operators $\{P_{k}\}$
	on Riemannian manifolds for all positive integers $k$ if
	$n$ is odd, and for $k\in\{ 1,\ldots,n/2\}$  if $n$ is even,
	which are called GJMS operators.
	Juhl \cite{ju1,ju2}  found  an explicit formula and a recursive formula
	for GJMS operators and $Q$-curvatures (see also Fefferman and
	Graham \cite{FGJuhl2013}).
	Graham and Zworski \cite{GZ} presented a family
	of fractional order conformally invariant operators
	$P_{\si }^{g}$  of non-integer order
	$\si \in (0,n/2)$
	on the conformal infinity of asymptotically hyperbolic manifolds.
	In addition, Chang and Gonz\'alez \cite{CG}
	showed that the operator $P_{\si }^{g}$ with
	$\si \in (0,n/2)$ can  be defined as a Dirichlet-to-Neumann operator
	of a conformally compact
	Einstein manifold by using localization method in \cite{CS},
	they also provided some new interpretations and properties of those fractional
	operators and their associated fractional $Q$-curvatures.
	There are	many research  conducted on the
	fractional operators $P_{\si }^{g}$ and their
	associated fractional $Q$-curvature,
	for instance, see \cite{AC,Chti1,clz,Chti2,CROn2011,
		DMAPrescribingI2002,DMAPrescribingII2002,ERMountain2002,
		JLXOn2014,JLXOn2015,jlxm,LTZOn2022,LTZCompactness2022}.
	
	Directly related to our current work are some work on  blow up analysis, a priori estimates, and existence and compactness of solutions to \eqref{main-eq}.  	For $\si \in(0,1)$, Jin-Li-Xiong \cite{JLXOn2014, JLXOn2015}
	proved the existence of the solutions to \eqref{main-eq}
	and derived some compactness properties.  More precisely, thanks to a very subtle approach based on approximation
	of the solutions of \eqref{main-eq} by a blow-up subcritical method, they proved the existence of solutions for  \eqref{main-eq}.  	For $\si \in(0,n/2)$,
	Jin-Li-Xiong \cite{jlxm} developed a unified approach to establish blow up profiles,
	compactness and existence of positive solutions to \eqref{main-eq}  by making use of integral representations.  Their main hypothesis is the so-called flatness condition. Namely,  let $ K\in C^{1}(\Sn )$
	($K\in C^{1,1}(\Sn )$ if $0<\si \leq1/2$)
	be a positive function. We  say that $K$ satisfies the flatness condition $(K)_{\beta}$ for some $\beta>0$ if for 	each critical point $q_{0}$ of $K$, in some geodesic normal	coordinates $\{y_{1}, \ldots, y_{n}\}$ centered at $q_{0}$,
	there exists a small neighborhood $\mathscr{O}$ of $0$  such
	that
	\begin{align}\label{kt}
	K(y)=K(0)+Q_{(q_{0})}^{(\beta)}(y)+R_{(q_{0})}(y)
	\quad \text{ for all }\, y \text{ close to 0},
	\end{align}
	where $Q_{(q_{0})}^{(\beta)}$ satisfies
	$$
	Q_{(q_{0})}^{(\beta)}(\lambda y)=\lambda^{\beta}
	Q_{(q_{0})}^{(\beta)}(y), \quad \forall\, \lambda>0,\,
	y \in \mathbb{R}^{n}, \,Q_{(q_{0})}^{(\beta)} \in C^{[\beta]-1,1}(\mathbb{S}^{n-1}),
	$$
	$R_{(q_{0})}(y)$ is $C^{[\beta \mid-1,1}$ near 0 with $\lim _{y \rightarrow 0} \sum_{0 \leq|\alpha| \leq[\beta]}|\partial^{\alpha} R_{(q_{0})}(y)||y|^{-\beta+|\alpha|}=0$. However, they were only able to handle the case $\beta \in (n-2\si,n)$ in the flatness hypothesis.   When the flatness order of $K$ is $n-2\si $, the
	$L^{\infty} (\Sn)$ estimates of the solutions to \eqref{main-eq} fail, see \cite{jlxm} for more details.
	
	The flatness condition excludes some very interesting functions $K$. In fact, note that an important	class of functions, which is worth including in any results of existence for \eqref{main-eq}, are the Morse functions
	($C^2$ having only nondegenerate critical points). Such functions can be written in the form $(K)_{\beta}$ with $\beta=2$. This special flatness type condition $\beta=2$ has been applied to  obtain existence and compactness results, see  Li \cite{LPrescribing1996} for  $\sigma=1$ with $n=4$  in the classical  Nirenberg problem \eqref{eq:Nirenberg};  Djadli-Malchiodi-Ahmedou \cite{DMAPrescribingII2002} in Paneitz operator $\si=2$ with $n=6$; Li-Tang-Zhou \cite{LTZOn2022} in the half Laplacian  $\si=1/2$ with $n=3$    and  \cite{LTZCompactness2022} in $Q$-curvatures problems.

Recently, there have been some works devoted to the existence  results via studying the flatness condition effect, and those  mainly use the critical points at infinity techniques introduced by
Bahri–Coron \cite{BahriCoronThe1991,BC2}.   For $\si\in (0,1)$,	see   Abdelhedi-Chtioui-Hajaiej \cite{Chti1}  with $\beta\in (1,n-2\si]$;  Abdelhedi-Chtioui \cite{AC} for a non-degeneracy condition  $n=2$ and $\si=1/2$. For $\si=2$, see Chtioui-Bensouf-Al-Ghamdi1 \cite{CBA2015} with $\beta=n$;    Al-Ghamdi-Chtioui-Rigane \cite{ACR2013}   with $\beta\in [1,n-4)$;   Chtioui-Rigan \cite{CROn2011}   with $\beta\in [n-4,n)$.	However,  for higher order case  including  $\si\in (0,n/2)$,  there are still plenty of technical difficulties which demand new ideas.
%

Convinced that the nondegeneracy assumption would exclude some interesting class of functions $K$, we adopt the flatness hypothesis used in \cite{JLXOn2014,JLXOn2015,jlxm}. But again, in order to include all
plausible cases $\beta\in [n-2\si,n)$ with $\si\in (0,n/2)$, we need to develop a new line of attack with new ideas. This is essentially due to the structure of the multiple blow-up points, which is much more complicated than in the classical setting. Many new phenomena emerge.
More precisely, it turns out that the strong interaction between the bubbles, in the case $\beta \in (n-2\si,n)$, forces all blow-up points
to be single,  and  $\beta=n-2\si$  can present multiple
blow-up points and  there is a phenomenon of balance that is the interaction of two bubbles of the same order with respect to the self interaction.

Our goal in this paper is  to include a larger class of functions $K$ in the existence and compactness results for \eqref{main-eq}.  For this aim, we develop  a self-contained approach which enables us to include  the  case $\beta\in [n-2\si,n)$ for all $\si\in (0,n/2)$.  In order to state our results, we need	the following notations and assumptions.

	Suppose that $K(x)$  satisfies $(K)_{\beta}$ condition with $\beta \in [n-2\si,n)$, assume also
	\begin{align}\label{q1}
	|\nabla Q_{(q_{0})}^{(\beta)}(y)| \sim|y|^{\beta-1} \quad \text { for all }\, y \text { close to }\, 0,
	\end{align}
	\begin{align}\label{q2}
	\left(\begin{array}{l}
	\int_{\mathbb{R}^{n}} \nabla Q_{(q_{0})}^{(\beta)}(y+\xi)(1+|y|^{2})^{-n}\, \ud y \\\\
	\int_{\mathbb{R}^{n}} Q_{(q_{0})}^{(\beta)}(y+\xi) \frac{1-|y|^{2}}{1+|y|^{2}}
	(1+|y|^{2})^{-n} \,\ud y
	\end{array}\right) \neq 0, \quad \forall\, \xi \in \mathbb{R}^{n},
	\end{align}
	\begin{align}\label{q3}
	\left(\begin{array}{l}
	\int_{\mathbb{R}^{n}} \nabla Q_{(q_{0})}^{(\beta)}(y+\xi)(1+|y|^{2})^{-n}\, \ud y \\\\
	\int_{\mathbb{R}^{n}} Q_{(q_{0})}^{(\beta)}(y+\xi)(1+|y|^{2})^{-n}\, \ud y
	\end{array}\right) \neq 0, \quad \forall\, \xi \in \mathbb{R}^{n} .
	\end{align}
For any $\alpha \in [n-2\si,n)$,	define
	$$
	\mathscr{K}_{\alpha}=\{q_{0} \in \Sn : \nabla_{g_{0}} K(q_{0})=0, \,
	\beta(q_{0})=\alpha\},
	$$
where $\beta(q_{0})$ represents the flatness order of $K$ at the point $q_0$.
	For $q_{0} \in \mathscr{K}_{n-2\si }$, we assume
	\begin{align}\label{iq2}
	\int_{\mathbb{R}^{n}} \nabla Q_{(q_{0})}^{(n-2\si )}(y+\xi)
	(1+|y|^{2})^{-n} \, \ud y=0 \quad \text{ if and only if } \xi=0.
	\end{align}
	Let
	\begin{align}\label{kn-2s}
	\mathscr{K}_{n-2\si }^{-}=
	\Big\{q_{0} \in \mathscr{K}_{n-2\si }:
	\int_{\mathbb{R}^{n}} z \cdot \nabla Q_{(q_{0})}^{(n-2\si )}(z)(1+|z|^{2})^{-n} \, \ud z<0\Big\},
	\end{align}
	and for any distinct $q^{(1)},q^{(2)} \in \mathscr{K}_{n-2\si }^{-}$,
	$M=M(q^{(1)}, q^{(2)})$ is a symmetric $2 \times 2$ matrix given by
	\begin{align}\label{m}
	M_{i j}= \begin{cases}
	\displaystyle -K(q^{(j)})^{-\frac{1+\si }{\si }}
	\int_{\mathbb{R}^{n}} y \cdot \nabla Q_{q^{(j)}}^{(n-2\si )}(y)(1+|y|^{2})^{-n} \, \ud y, & i=j, \\
	\displaystyle -\frac{2^{\frac{n-2\si }{2}}(n-2\si )^2}{4n}
	\frac{\pi^{n/2}}{\Gamma(\si +\frac{n}{2})}\frac{G_{q^{(i)}}(q^{(j)})}{\sqrt{K(q^{(i)}) K(q^{(j)})}}, & i \neq j,
	\end{cases}
	\end{align}
	where
	\be\label{gf}
	G_{q^{(i)}}(q^{(j)})
	=\Big(\frac{1}{1-\cos d(q^{(i)},q^{(j)})}\Big)^{\frac{n-2\si }{2}}
	\ee
	is the Green's function of $P_{\si }$ on $\Sn ,$
	and  $d(\cdot\,,\,\cdot)$ denotes the geodesic distance.

	Let $\gamma\in(0,1)$, $C^{\gamma}(\Omega)$ denotes the standard
	H$\ddot{\mathrm{o}}$lder space over the domain $\Omega$.
	For simplicity, we use $C^{\gamma}(\Omega)$ to denote $C^{[\gamma],\gamma-[\gamma]}(\Omega)$
	when $1<\gamma\notin \mathbb{N}_{+}.$
	For $R>0$, $\alpha\in(0,1)$ and $\si \in (0,n/2)$,  we define
	$$
	\mathscr{O}_{R}:=\{v\in C^{2\si +\alpha}(\Sn ):
	1/R<v<R,\,\|v\|_{C^{2\si +\alpha}}<R\}.
	$$

	For $P \in \Sn $, $1 \leq t<\infty$,  let $\varphi_{P, t}$ be the M\"obius transformation on
	$\Sn$ which, under stereographic projection with respect to the north pole $P$, sends $y$ to $ty$ (see \cite{LPrescribing1995}). The totality of such a set of conformal transforms is diffeomorphic to the unit ball $B^{n+1}$ in $\mathbb{R}^{n+1}$, with the identity transformation identified with the origin in $B^{n+1}$ and
	\be\label{eq:varphiPt2}
	\varphi_{P, t} \leftrightarrow(\frac{t-1}{t}) P=: p \in B^{n+1}.
	\ee

	Our main result is:
		\begin{theorem}\label{them2}
		Let $K \in C^{1}(\Sn )$ ($K\in C^{1,1}(\Sn )$
		if $\si \leq 1/2$) be a positive function satisfying that for
		any critical point $q_{0} \in \Sn $ of $K$,
		there exists some real numbers
		$\beta=\beta(q_{0}) \in[n-2\si , n)$
		such that $K\in C^{[\beta],\beta-[\beta]}(\Sn )$ and \eqref{kt}--\eqref{q3} hold in some
		geodesic normal coordinate system centered at $q_{0}$.
		Suppose also that  if either $\sharp \mathscr{K}_{n-2\si }^{-} \leq 1$
		or for any two distinct points $q^{(1)}, q^{(2)} \in \mathscr{K}_{n-2\si }^{-}$,
      we have $M_{11} M_{22}<M_{12}^{2}$, where $M=M(q^{(1)},q^{(2)})$.		

		Then for all $0<\alpha,$ $\va<1$,   there exists some constant $C=C(n,\delta,\va,\alpha)$ such that,
		for all $\va\leq \mu\leq 1$, any positive solution $v$ to \eqref{main-eq}
		with $K$ replaced by $K_{\mu}=\mu K+(1-\mu)$, we have
		\begin{align}\label{th-bound}
		1 / C<v<C, \quad\|v\|_{C^{2\si +\alpha}(\Sn )}<C.
		\end{align}
		For all $ P \in \Sn $, $t \geq C$, we have
		\begin{align}\label{them-eq-1}
		\int_{\Sn } K \circ \varphi_{P, t}(x) x \neq 0,
		\end{align}
		and for all $R \geq C$, $t \geq C$,
		\begin{align}\label{them-eq-2}
		\operatorname{deg}(v-(P_{\si })^{-1} K v^{\frac{n+2\si }{n-2\si }}, \mathscr{O}_{R}, 0)
		=(-1)^{n} \operatorname{deg}\Big(\int_{\Sn } K \circ \varphi_{P, t}(x) x, B^{n+1}, 0\Big).
		\end{align}
		
		If we further assume that
		\begin{align*}
		\mathrm{deg}\Big( \int_{\Sn } K\circ \varphi_{P,t}(x)x, B^{n+1},0\Big) \ne 0 \quad \text{	for large $t$,}
		\end{align*}
		then \eqref{main-eq} has at least one solution.
	\end{theorem}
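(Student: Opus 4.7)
The plan is to establish the a priori bound \eqref{th-bound} by a blow-up argument, and then derive \eqref{them-eq-1}--\eqref{them-eq-2} together with existence by a Leray--Schauder degree computation along the homotopy $\mu \in [\va,1]$. First I would reformulate \eqref{main-eq} (with $K$ replaced by $K_\mu$) as the integral equation $v = c(n,\si)\, P_\si^{-1}(K_\mu v^{(n+2\si)/(n-2\si)})$ via \eqref{gr}, and argue by contradiction. Assume $\{v_i\}$ is a sequence of positive solutions with $\mu_i \in [\va,1]$ whose $L^\infty$ norms diverge. A standard selection of local maxima, combined with the non-local Harnack and Liouville tools already developed in \cite{jlxm}, yields finitely many isolated simple blow-up points $q_i^{(\ell)} \to q^{(\ell)} \in \Sn$ for $\ell = 1,\dots, N$. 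Each $q^{(\ell)}$ is a critical point of $K$, and after M\"obius rescaling with rates $\lambda_i^{(\ell)} \to \infty$, $v_i$ converges near $q_i^{(\ell)}$ to the standard bubble solution of the model equation $(-\Delta)^\si U = c(n,\si) K(q^{(\ell)}) U^{(n+2\si)/(n-2\si)}$ on $\R^n$.

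The next step extracts contradictions from local Pohozaev identities at each blow-up point. Using the expansion \eqref{kt} of $K$ together with the integral representation \eqref{gr}, the identity at $q^{(\ell)}$ balances a self-interaction term of order $(\lambda_i^{(\ell)})^{-\beta(q^{(\ell)})}$ against bubble--bubble interaction terms of order $(\lambda_i^{(\ell)}\lambda_i^{(m)})^{-(n-2\si)/2}$ coming from the Green's function \eqref{gf}. For $\beta(q^{(\ell)}) > n-2\si$, the standard analysis combined with \eqref{q1}--\eqref{q3} precludes the single blow-up case $N=1$ and forces every blow-up point in the multi-blow-up case to have $\beta = n-2\si$; when $N=1$ with $\beta(q^{(1)}) = n-2\si$, hypothesis \eqref{iq2} rules this out. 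The main obstacle, and the genuinely new case, is $N \geq 2$ with all $q^{(\ell)} \in \mathscr{K}_{n-2\si}$: here self-interaction and bubble--bubble interaction live at the same order $\lambda_i^{-(n-2\si)}$, producing the balance phenomenon mentioned in the introduction. A careful matched expansion, testing the integral equation against the conformal Killing fields, produces a $2 \times 2$ linear system for the blow-up rates $\{(\lambda_i^{(\ell)})^{-(n-2\si)}\}$ whose matrix is, up to a positive prefactor, precisely $M$ of \eqref{m}. Since $q^{(\ell)} \in \mathscr{K}_{n-2\si}^-$ forces $M_{\ell\ell} > 0$ while $M_{12} < 0$ by positivity of \eqref{gf}, the hypothesis $M_{11}M_{22} < M_{12}^2$ gives $\det M < 0$, so $M$ is indefinite and admits no solution with positive components. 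The case $\sharp \mathscr{K}_{n-2\si}^- \leq 1$ makes multiple blow-ups of this type impossible for trivial combinatorial reasons. The technical heart lies in tracking the exact coefficients in the two interaction terms so that the resulting matrix is genuinely the $M$ defined in \eqref{m}.

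With \eqref{th-bound} established, the operator $\Phi_\mu(v) := v - c(n,\si)\, P_\si^{-1}(K_\mu v^{(n+2\si)/(n-2\si)})$ has no zero on $\partial \mathscr{O}_R$ for $R$ and $\va^{-1}$ large and all $\mu \in [\va,1]$, so $\deg(\Phi_\mu, \mathscr{O}_R, 0)$ is independent of $\mu$ by homotopy invariance. To compute it for small $\mu$, I would carry out a finite-dimensional Lyapunov--Schmidt reduction along the $(n+1)$-dimensional critical manifold of standard bubbles $\{|J_{\varphi_{P,t}}|^{(n-2\si)/(2n)}\}$ parametrized by $p \in B^{n+1}$ via \eqref{eq:varphiPt2}. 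The reduced vector field on $B^{n+1}$ equals, modulo higher-order corrections and a positive prefactor, $\int_{\Sn} K \circ \varphi_{P,t}(x)\, x \, \ud \mathrm{vol}_{g_0}(x)$. Its non-vanishing for $t \geq C$ follows from the fact that a zero here would correspond via the reduction to a blow-up configuration precluded by the first part of the argument, which yields \eqref{them-eq-1}. The identification of $\deg(\Phi_\mu,\mathscr{O}_R,0)$ with the degree of the reduced field on $B^{n+1}$, with the sign $(-1)^n$ arising from the orientation convention of the reduction, then gives \eqref{them-eq-2}. The final existence statement is immediate: if the right-hand side of \eqref{them-eq-2} is nonzero, then $\Phi_1$ must have a zero in $\mathscr{O}_R$, i.e., \eqref{main-eq} admits a solution.
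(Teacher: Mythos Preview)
Your overall strategy---blow-up analysis plus Pohozaev balance for compactness, then homotopy in $\mu$ and a reduction at small $\mu$ for the degree---matches the paper's (Theorem~\ref{pro-lambda-exi} and Theorem~\ref{prop-theorem3.1}). But several steps as written do not go through.

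First, the balance system is $N\times N$, not $2\times 2$; for $N\ge 3$ your indefiniteness argument does not apply. The paper's Theorem~\ref{pro-lambda-exi} produces $\sum_\ell M_{j\ell}\lambda_\ell=0$ with all $\lambda_\ell>0$, and the contradiction with the pairwise hypothesis $M_{ii}M_{jj}<M_{ij}^2$ follows because row $i$ gives $M_{ii}\lambda_i\ge |M_{ij}|\lambda_j$ and row $j$ gives $M_{jj}\lambda_j\ge |M_{ij}|\lambda_i$, whence $M_{ii}M_{jj}\ge M_{ij}^2$. Second, \eqref{iq2} does \emph{not} rule out a single blow-up: it only forces the recentering parameter to be $0$, at which the first line of \eqref{q2} vanishes; one needs the second line of \eqref{q2} to generate a nonzero term in the Pohozaev balance, and the paper simply invokes \cite[Theorem~3.5]{jlxm} for this step. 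Third, your justification of \eqref{them-eq-1} is circular: a zero of $p\mapsto\int_{\Sn}K\circ\varphi_{P,t}(x)\,x$ does not manufacture a solution of \eqref{main-eq}, so the compactness part cannot forbid it. The paper obtains \eqref{them-eq-1} from \cite[Lemma~6.7]{LPrescribing1995}, a direct asymptotic expansion of that integral as $t\to\infty$ using \eqref{kt} and \eqref{q1}--\eqref{q3}. Finally, the small-$\mu$ step in the paper is not a Lyapunov--Schmidt reduction but the Chang--Yang variational reduction (Proposition~\ref{lem3.6}, Theorem~\ref{prop-theorem3.1}): one minimizes $E_{K_\mu\circ\varphi_{P,t}}$ over $\mathscr{S}_0$ near $1$ and identifies the Lagrange multiplier $\Lambda_p$ with $\int_{\Sn}K\circ\varphi_{P,t}(x)\,x$ through the Kazdan--Warner identity and a positive-definite coefficient matrix. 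Your reduction would likely reach the same conclusion, but it is not the route the paper takes.
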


		\begin{remark}
	For $n \geq 3$ and $\si=1$, the above result was established by  Li \cite{LPrescribing1996}.	
Theorem \ref{them2}   gives the compactness and existence results of
the solution to \eqref{main-eq} when $K$ satisfies the  flatness condition
 $\beta \in [n-2\si,n)$.
Moreover,  Theorem \ref{them2} establishes a unified  result on the compactness and existence of solutions corresponding to prescribing fractional $Q$-curvatures problem.
		\end{remark}

	If we consider a more specific expression of $K(y)$ with $Q(y)=\sum_{j=1}^{n}a_{j}|y_{j}|^{\beta}$,
  additional  degree-counting formula of the solutions to \eqref{main-eq}  will be obtained. 	
	\begin{corollary}\label{cor1}
		Let  $K \in C^{1}(\Sn )$ ($K\in C^{1,1}(\Sn )$ if $0<\si \leq 1/2$) be a positive function satisfying that for any critical point $q_{0}$ of
		$K$,
		under the stereographic projection coordinate system $\{y_1,\ldots, y_{n}\}$
		with $q_{0}$ as south pole,
		there exist some small neighbourhood $\mathscr{O}$ of $0$
		and some real number $\beta \in[n-2\si ,n)$, such that
		$K\in C^{[\beta],\beta-[\beta]}(\Sn )$
		and
		$$
		K(y)=K(0)+\sum_{j=1}^{n}a_{j}|y_{j}|^{\beta}+R_{(q_0)}(y)\quad\text{ in }\,\mathscr{O},
		$$
		where $a_{j}=a_{j}(q_{0})\ne 0$, $\sum_{j=1}^n a_{j}\ne 0$, and
		$R_{(q_{0})}(y)\in C^{[\beta]-1,1}(\mathscr{O})$
		with $$\sum_{|\alpha|=0}^{[\beta]}|\partial^{\alpha} R_{(q_{0})}(y)||y|^{-\beta+|\alpha|}\to 0
		\quad \text{ as }\, |y|\to 0.$$
		
		Suppose also that if either $\sharp \mathscr{K}_{n-2\si }^{-} \leq 1$ or $M_{11} M_{22}<M_{12}^{2}$
		for all distinct $q^{(1)},$ $q^{(2)} \in \mathscr{K}_{n-2\si }^{-}$,
		where
		$\mathscr{K}_{n-2\si }^{-} $ is as in \eqref{kn-2s} and $M=M(q^{(1)}, q^{(2)})$
		is as in \eqref{m}.
		Then for all $0<\alpha<1$,  there exists some constant $C$ such that,
		for all solutions $v$ of \eqref{main-eq}, we have
		$$
		1 / C<v<C, \quad\|v\|_{C^{2\si +\alpha}(\Sn )}<C,
		$$
		and for all $R \geq C$,
		$$
		\operatorname{deg}(v-(P_{\si })^{-1} K v^{\frac{n+2\si }{n-2\si }}, \mathscr{O}_{R}, 0)
		=-1+(-1)^{n} \sum_{\nabla_{q_{0}} K(q_{0})=0 \atop
			\sum_{j=1}^{n} a_{j}(q_{0})<0}(-1)^{i(q_{0})},
		$$
		where
		$$
		i(q_{0})=\sharp\{a_{j}(q_{0}): a_{j}(q_{0})<0,\, 1 \leq j \leq n\}.
		$$
		If we further assume that
		$$
		\sum_{\nabla_{g_0} K(q_{0})=0 \atop  \sum_{j=1}^{n} a_{j}(q_{0})<0}(-1)^{i(q_{0})} \neq(-1)^{n},
		$$
		then \eqref{main-eq} has at least one solution.
	\end{corollary}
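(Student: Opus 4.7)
The plan is to deduce Corollary \ref{cor1} from Theorem \ref{them2} in two steps: verify that $K(y)=K(0)+\sum_j a_j|y_j|^\beta+R_{(q_0)}(y)$ satisfies the abstract hypotheses of the theorem, then evaluate the topological degree on the right-hand side of \eqref{them-eq-2} explicitly.

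For the first step, a direct computation gives $\nabla Q_{(q_0)}^{(\beta)}(y)=\beta\sum_j a_j|y_j|^{\beta-1}\mathrm{sgn}(y_j)e_j$, so $|\nabla Q^{(\beta)}(y)|\sim|y|^{\beta-1}$ follows from $a_j\neq0$, giving \eqref{q1}. The $j$-th component of $\int_{\R^n}\nabla Q_{(q_0)}^{(\beta)}(y+\xi)(1+|y|^2)^{-n}\,\ud y$ equals $\beta a_j\,\Phi_j(\xi_j)$ with
$$
\Phi_j(\xi_j)=\int_{\R^n}|y_j+\xi_j|^{\beta-1}\mathrm{sgn}(y_j+\xi_j)(1+|y|^2)^{-n}\,\ud y,
$$
which, by evenness of $(1+|y|^2)^{-n}$ in the remaining coordinates and a one-variable monotonicity argument, is a strictly increasing odd function of $\xi_j$. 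Hence the gradient vanishes iff $\xi=0$, giving \eqref{iq2}; combined with similarly explicit non-vanishing of the scalar integrals involving $Q^{(\beta)}$ (which for $Q=\sum a_j|y_j|^\beta$ reduce to combinations of the $a_j$ that are nonzero by the hypothesis $\sum_j a_j\neq 0$), we obtain \eqref{q2} and \eqref{q3}. The matrix condition $M_{11}M_{22}<M_{12}^2$ is inherited as a hypothesis of the corollary.

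Next I would invoke Theorem \ref{them2} to reduce the problem to computing $\deg(F_t,B^{n+1},0)$ where $F_t(p)=\int_\Sn K\circ\varphi_{P,t}(x)\,x\,\ud\mathrm{vol}_{g_0}$ and $p=\frac{t-1}{t}P$. For $t$ large, an asymptotic expansion in stereographic coordinates centered at $P$ shows that the zeros of $F_t$ concentrate near critical points $q_0$ of $K$ satisfying the Pohozaev/Kazdan--Warner-type obstruction $\int y\cdot\nabla Q^{(\beta)}_{(q_0)}(y)(1+|y|^2)^{-n}\,\ud y<0$; for the specific $Q$ at hand this is equivalent to $\sum_j a_j(q_0)<0$ (hence also to $q_0\in\mathscr{K}_{n-2\si}^-$ when $\beta(q_0)=n-2\si$). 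At each such $q_0$, the local degree is computed from the leading-order Jacobian: by the product structure of $Q$ and the monotonicity of $\Phi_j$, the $n$ tangential $P$-directions contribute a diagonal block with $j$-th diagonal sign $\mathrm{sgn}(a_j(q_0))$, yielding $(-1)^{i(q_0)}$, while the radial $t$-direction contributes a definite sign. A homotopy argument comparing $F_t$ to an explicit model map identifies a global baseline contribution, which together with the local sum and the factor $(-1)^n$ from \eqref{them-eq-2} assembles into the stated formula $-1+(-1)^n\sum_{q_0:\sum_j a_j(q_0)<0}(-1)^{i(q_0)}$. Non-triviality of this degree then yields existence via \eqref{them-eq-2}.

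The hardest step will be the sign bookkeeping in the final degree computation: correctly tracking the contribution from the boundary sphere $\partial B^{n+1}$ (where $F_t$ degenerates as $t\to\infty$), the radial $t$-direction at each stable critical point, and verifying that the homotopy producing the $-1$ baseline term remains admissible uniformly in the approximation. The remaining ingredients --- the verification of \eqref{q1}--\eqref{iq2} and the explicit local degree $(-1)^{i(q_0)}$ --- are direct computations once the coordinate-wise separability of $Q$ is exploited.
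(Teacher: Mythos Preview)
Your approach is essentially the same as the paper's. The paper's own proof is a one-line citation: it invokes the proof of Theorem~\ref{them2} together with the perturbation result (Theorem~\ref{prop-theorem3.1}) for compactness and the degree identity \eqref{them-eq-2}, and then outsources both the verification that $Q_{(q_0)}^{(\beta)}(y)=\sum_j a_j|y_j|^\beta$ satisfies \eqref{q1}--\eqref{q3}, \eqref{iq2} and the explicit evaluation of $\deg\big(\int_{\Sn}K\circ\varphi_{P,t}(x)\,x,\,B^{n+1},0\big)$ to \cite[Lemma~6.7 and Corollary~6.2]{LPrescribing1995}. What you sketch in your second step --- the localization of zeros of $F_t$ near critical points with $\sum_j a_j(q_0)<0$, the local index $(-1)^{i(q_0)}$ from the diagonal structure, and the baseline $-1$ --- is precisely the content of those cited results from Li's 1995 paper, so rather than reproving them you may simply cite them as the paper does.
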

\begin{remark}
Corollary \ref{cor1} generalizes the corresponding results of Jin-Li-Xiong
	\cite[Theorem 1.4]{jlxm}.
\end{remark}	
	

Our methods rely on a readapted characterization of  blow up behavior  introduced by  Schoen-Zhang \cite{SZPrescribed1996,SZcv} and used in the above mentioned
papers \cite{LPrescribing1995,LPrescribing1996,JLXOn2014,JLXOn2015,jlxm}.
However, there is a serious problem of divergence of the integrals when $\beta=n-2\si$.
 To overcome this challenging problem, we perform a local analysis to give precise estimates to
 further characterize the blow up behavior.
In detail, we obtain the necessary conditions for the solution to \eqref{main-eq}
  blow up at more than one point by using the  blow up analysis,
  the Pohozaev type identity (see Proposition \ref{pro-pz}), and the assumptions of of $K$.
  This approach is different from the proof  in \cite{jlxm} with the case $n-2\si<\beta<n$.

The present paper is organized as the following.
	In Section \ref{sec:2}, we  characterize the blow up points for solutions
	to \eqref{main-eq}, which plays a key role in proving the compactness result of
	Theorem \ref{them2}  (see Theorem \ref{pro-lambda-exi}).
	The proof of Theorem \ref{pro-lambda-exi}  is
	mainly based on Pohozaev type identity (see Proposition \ref{pro-pz}) and the results of the blow up analysis established by Jin-Li-Xiong \cite{jlxm}.
	In Section \ref{sec:3},  we follow the arguments of Jin-Li-Xiong \cite{JLXOn2015} and   establish a perturbation result for all $\si \in(0,n/2)$ (see Theorem \ref{prop-theorem3.1}),
	which is necessary to prove the
	existence result of Theorem \ref{them2}.
	In Section \ref{sec:4}, we complete the proof of Theorem \ref{them2}
	and Corollary \ref{cor1} by using Theorem \ref{pro-lambda-exi},
	Theorem \ref{prop-theorem3.1} and some results in \cite[Section 6]{LPrescribing1995}.
		In Appendix \ref{sec:A}, we provide several technical results obtained in Jin-Li-Xiong \cite{jlxm}, which is necessary in our proof. 

	\section{Characterization of blow up behavior}\label{sec:2}
	
	In this section, we further characterizes the behavior of blow up points for solutions
	to \eqref{main-eq} by using integral representation, some blow up estimates in Appendix \ref{sec:A} and
 the properties of  matrix $M$, which  plays a key role in proving main result
 concerning compactness and existence.	
Theorem \ref{pro-lambda-exi} below also gives a necessary condition when the solution to \eqref{main-eq}
has more than one isolate simple blow up point.

We first review the definitions of  blow up point.
Let $\Omega$ be a domain in $\mathbb{R}^{n}$ and $K_{i}$ are
 nonnegative bounded functions in $\mathbb{R}^{n}.$
 Let $\{\tau_{i}\}_{i=1}^{\infty}$ be a sequence of
 nonnegative constants satisfying $\lim _{i \rightarrow \infty} \tau_{i}=0$, and set
$$
p_{i}=\frac{n+2 \sigma}{n-2 \sigma}-\tau_{i}.
$$
Suppose that $0 \leq u_{i} \in L_{{loc}}^{\infty}(\mathbb{R}^{n})$
satisfies the nonlinear integral equation
\be\label{2.1}
u_{i}(x)=\int_{\mathbb{R}^{n}} \frac{K_{i}(y) u_{i}(y)^{p_{i}}}{|x-y|^{n-2 \sigma}}\, \ud y
\quad \text { in } \,\Omega.
\ee
We assume that $K_{i} \in C^{1}(\Omega)$ $(K_{i}\in C^{1,1}(\Omega)$ if
$\sigma\leq 1/2$) and, for some positive constants $A_{1}$ and $A_{2}$,
\be\label{2.2111}
1 / A_{1} \leq K_{i}, \quad \text { and } \quad
\|K_{i}\|_{C^{1}(\Omega)} \leq A_{2},\,( \|K_{i}\|_{C^{1,1}(\Omega)} \leq A_{2}\, \text { if }\,\sigma \leq \frac{1}{2}).
\ee

\begin{definition}
  Suppose that $\{K_{i}\}$ satisfies \eqref{2.2111} and $\{u_{i}\}$ satisfies \eqref{2.1}.
  A point $\overline{y} \in \Omega$ is called a blow up point of $\{u_{i}\}$ if there exists a sequence
  $y_{i}$ tending to $\overline{y}$ such that $u_{i}(y_{i}) \rightarrow \infty$.
\end{definition}

\begin{definition}\label{defn1.1}
A blow up point $\overline{y} \in \Omega$ is called an isolated blow up point of $\{u_{i}\}$ if there exists $0<\overline{r}<\operatorname{dist}(\overline{y}, \Omega)$, $\overline{C}>0$, and $a$
sequence $y_{i}$ tending to $\overline{y}$, such that $y_{i}$ is a local maximum point of $u_{i},$ $u_{i}(y_{i}) \rightarrow \infty$ and
\be\label{1.57}
u_{i}(y) \leq \overline{C}|y-y_{i}|^{-2 \sigma /(p_{i}-1)} \quad \text { for all }\, y \in B_{\overline{r}}(y_{i}).
\ee
\end{definition}
Let $y_{i} \rightarrow \overline{y}$ be an isolated blow up point of $\{u_{i}\}$, and define,
for $r>0$,
$$
\overline{u}_{i}(r):=\frac{1}{|\partial B_{r}(y_{i})|} \int_{\partial B_{r}(y_{i})} u_{i}
\quad \text{and}\quad
\overline{w}_{i}(r):=r^{2 \sigma /(p_{i}-1)} \overline{u}_{i}(r).
$$

\begin{definition}\label{defn1.3}
A point  $y_{i} \rightarrow \overline{y} \in \Omega$ is called an isolated simple blow up point if $y_{i} \rightarrow \overline{y}$ is an isolated blow up point such that for some $\rho>0$ (independent of i), $\overline{w}_{i}$ has precisely one critical point in $(0, \rho)$ for large $i$.
\end{definition}

In what follows, we consider a situation more general
than the properties of set $\mathcal{K}_{n-2\si}^{-}$ given in \eqref{iq2} and \eqref{kn-2s}.
 Let $K \in C^1(\mathbb{S}^n)$ ($K\in C^{1,1}(\Sn )$
		if $\si \leq 1/2$)  be some positive function satisfying
that for any critical point $q_0 \in \mathbb{S}^n$ of $K$, there exists some real number $\beta=\beta\left(q_0\right) \in[n-2, n)$ such that \eqref{kt}--\eqref{q3} hold in some geodesic normal coordinate system centered at $q_0$.
Let $\widehat{\mathscr{K}}_{n-2\si}^{-}$ denote the set of critical points $q_0$ of $K$ with $\beta(q_0)=n-2\sigma$ and simultaneously for some $\eta_{0}\in \mathbb{R}^{n}$ satisfying
\begin{align}\label{gkn-2s}
\left\{\begin{array}{l}
\int_{\mathbb{R}^n} \nabla Q_{(q_0)}^{(n-2\si)}(y+\eta_0)(1+|y|^2)^{-n}\, \ud y=0, \\\\
\int_{\mathbb{R}^n} y \cdot \nabla Q_{(q_0)}^{(n-2\si)}(y+\eta_0)(1+|y|^2)^{-n}\, \ud y<0 .
\end{array}\right.
\end{align}

When $\# \widehat{\mathscr{K}}_{n-2\si}^{-} \geq 2$,
for distinct $q^{(1)}, \ldots, q^{(k)} \in \widehat{\mathscr{K}}_{n-2\si}^{-}$,
$\eta^{(j)} \in \mathbb{R}^n$ $(1 \leq j \leq k)$, satisfying \eqref{gkn-2s} with
$q_0=q^{(j)},$ $\eta_0=\eta^{(j)}$, we define a $k \times k$ symmetric metric $M=M\left(q^{(1)}, \ldots, q^{(k)}, \eta^{(1)}, \ldots, \eta^{(k)}\right)$ by
\begin{align}\label{mij}
	M_{i j}= \begin{cases}
	\displaystyle -K(q^{(j)})^{-\frac{1+\si }{\si }}
	\int_{\mathbb{R}^{n}} y \cdot \nabla Q_{q^{(j)}}^{(n-2\si )}(y+\eta^{(j)})(1+|y|^{2})^{-n} \, \ud y, & i=j, \\
	\displaystyle -\frac{2^{\frac{n-2\si }{2}}(n-2\si )^2}{4n}
	\frac{\pi^{n/2}}{\Gamma(\si +\frac{n}{2})}\frac{G_{q^{(i)}}(q^{(j)})}{\sqrt{K(q^{(i)}) K(q^{(j)})}}, & i \neq j.
	\end{cases}
	\end{align}

The result about characterization of blow up behavior
of the solutions to \eqref{main-eq} is:
	\begin{theorem}\label{pro-lambda-exi}
		Let  $K \in C^{1}(\Sn ) $ ($K\in C^{1,1}(\Sn )$
		if $\si \leq 1/2$) be a positive function satisfying that for any critical point $q_{0}$ of
		$K$, there exists some real number $\beta=\beta(q_{0}) \in[n-2\si , n)$,
		such that \eqref{kt}--\eqref{q3}
		hold in some geodesic normal coordinate system centered at $q_{0}$.
		Let  $\{v_{i}\}$ be a sequence of solutions to \eqref{main-eq}
		that blows up at $\{q^{(1)}, \ldots, q^{(k)}\}$ with $k \geq 2$.
		Then we have $q^{(1)}, \ldots, q^{(k)} \in \widehat{\mathscr{K}}_{n-2\si }^{-}$,
		and for some $\eta^{(j)} \in \mathbb{R}^{n}$ satisfying
 \eqref{gkn-2s}
		with $q_{0}=q^{(j)},$ $\eta_{0}=\eta^{(j)}(1 \leq j \leq k)$, the equation
		$$
		\sum_{\ell=1}^{k} M_{j \ell} \lambda_{\ell}=0
		$$
		has at least one solution $\lambda_{\ell} >0,$ $\ell=1, \ldots, k$,
		 where $\widehat{\mathscr{K}}_{n-2\si }^{-}$ is as in \eqref{gkn-2s} and $M_{j\ell}$
is as in \eqref{mij}.
	\end{theorem}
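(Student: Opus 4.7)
The plan is to combine the blow-up profile description from Appendix \ref{sec:A} with the Pohozaev identity in Proposition \ref{pro-pz} applied separately at each blow-up point $q^{(j)}$, and to read off both the characterization \eqref{gkn-2s} of $\widehat{\mathscr{K}}_{n-2\si}^-$ and the balance system $\sum_\ell M_{j\ell}\lambda_\ell = 0$ from the leading-order terms. First I invoke the results in Appendix \ref{sec:A}: after passing to a subsequence, every $q^{(j)}$ is an isolated simple blow-up point, the rescaling of $v_i$ around a local maximum $y_i^{(j)}\to q^{(j)}$ converges to the standard bubble $(1+|y|^2)^{-(n-2\si)/2}$, and the heights $m_i^{(j)}:=v_i(y_i^{(j)})$ tend to $\infty$. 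Working in stereographic coordinates centered at $q^{(j)}$, I let $\eta^{(j)}\in\mathbb{R}^n$ be the (possibly nonzero) limit of the normalized displacement of $y_i^{(j)}$ from $q^{(j)}$ at the bubble's natural scale.

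Next I apply the Pohozaev identity on a small geodesic ball $B_r(q^{(j)})$ of fixed radius $r$. Using the flatness expansion \eqref{kt} and the bubble convergence, after dividing by the natural scale $(m_i^{(j)})^{-2\beta(q^{(j)})/(n-2\si)}$ the bulk contribution converges to the local integrals
\[
I_1^{(j)} := \int_{\mathbb{R}^n} \nabla Q_{(q^{(j)})}^{(\beta)}(y+\eta^{(j)})(1+|y|^2)^{-n}\,\ud y, \qquad I_2^{(j)} := \int_{\mathbb{R}^n} y\cdot \nabla Q_{(q^{(j)})}^{(\beta)}(y+\eta^{(j)})(1+|y|^2)^{-n}\,\ud y,
\]
while the boundary/Riesz-type remainder reproduces, up to the universal constant appearing in \eqref{mij}, the Green's function cross-interactions $\sum_{\ell\ne j} G_{q^{(j)}}(q^{(\ell)})(m_i^{(j)} m_i^{(\ell)})^{-1}$. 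Comparing the two scales as $i\to\infty$ forces $\beta(q^{(j)})=n-2\si$ for every $j$, because a strictly larger flatness order would make the local contributions negligible against the strictly positive cross-interaction which is necessarily present for $k\ge 2$ (since $G_{q^{(i)}}(q^{(j)})>0$). The vectorial identity then yields $I_1^{(j)}=0$, which is the first line of \eqref{gkn-2s}.

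With $\beta(q^{(j)})=n-2\si$ in hand, both contributions to the scalar Pohozaev identity scale like $(m_i^{(j)})^{-2}$, and I define $\lambda_j$ as a positive multiple of the limit of $m_i^{(1)}/m_i^{(j)}$ (normalized so that all $\lambda_j>0$); the leading-order identity at each $j$ then reduces to exactly $\sum_{\ell=1}^k M_{j\ell}\lambda_\ell = 0$ with $M_{j\ell}$ as in \eqref{mij}. Testing this system with the positive vector $(\lambda_\ell)$ and using that the off-diagonal entries are strictly negative forces $M_{jj}>0$ for each $j$, i.e.\ $I_2^{(j)}<0$, which is the second line of \eqref{gkn-2s}; hence $q^{(j)}\in\widehat{\mathscr{K}}_{n-2\si}^-$. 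The main obstacle I expect is in the second step: at the critical flatness $\beta=n-2\si$, the local scaling integrals sit right at the borderline of integrability, and one has to control the remainder coming from $R_{(q^{(j)})}(y)$ and from the Green's function boundary expansion uniformly in $i$ in order to isolate the leading-order balance cleanly. This is precisely the divergence phenomenon flagged in the introduction, and overcoming it is the whole point of the self-contained local analysis developed in this paper.
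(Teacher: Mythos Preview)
Your plan is essentially the paper's proof: cite the Appendix to reduce to isolated simple blow-up, pull each bubble to $\mathbb{R}^n$ by stereographic projection, apply the scalar Pohozaev identity of Proposition~\ref{pro-pz} on $B_\delta(x_i^{(j)})$, and read off the balance $\sum_\ell M_{j\ell}\lambda_\ell=0$ from the leading terms; the paper also cites \cite[Theorem 3.4]{jlxm} directly for $\beta(q^{(j)})=n-2\si$ rather than rederiving it from a scale comparison as you suggest.

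Two places where your sketch is vaguer than the actual argument and where you should be careful. First, the Pohozaev identity is \emph{scalar}: its bulk term produces only your $I_2^{(j)}$, not $I_1^{(j)}$. The vanishing $I_1^{(j)}=0$ is obtained in the paper by a separate computation (the claim \eqref{gra-k-ui}): one writes $\int_{B_\delta}\nabla\widetilde K\,u_i^{2n/(n-2\si)}$ using the integral equation, and the double integral with kernel $(x-y)/|x-y|^{n-2\si+2}$ vanishes by antisymmetry, leaving only an $O(m_{ij}^{-2})$ remainder from $\nabla h_\delta$. Second, the cross-interaction $\sum_{\ell\ne j}G_{q^{(\ell)}}(q^{(j)})\lambda_\ell$ does not come from a ``boundary/Riesz-type remainder'' in any direct sense; it enters through the term $\int_{B_\delta}\widetilde K\,u_i^{p_i}h_\delta$ in Proposition~\ref{pro-pz}, via the limit $u_i(x_i^{(j)})h_\delta(x_i^{(j)})\to b^{(j)}(0)$, where $b^{(j)}(0)$ is computed from the global profile \eqref{uiu-lim}--\eqref{uiu} by summing the singular contributions of the \emph{other} bubbles. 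The actual boundary integral $\int_{\partial B_\delta}$ is negligible. With these two points made explicit, your outline coincides with the paper's proof.
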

	
	\begin{proof}
		It follows from \cite[Theorem 3.3]{jlxm} that,
		after passing to a subsequence, $\{v_{i}\}$ has only isolated
		simple blow up points. Moreover,
		if $\{v_{i}\}$ blows up at $\{q^{(1)}, \ldots, q^{(k)}\}$ with $k \geq 2$,   we know from  \cite[Theorem 3.4]{jlxm} that $\beta(q^{(j)})=n-2\si $ for each $ j \in \{1,\ldots,k\}$.

		Using \eqref{gr}, we write \eqref{main-eq} as the form
		\begin{align}\label{ieq}
		v_{i}(\xi)=\frac{\Gamma(\frac{n+2 \si }{2})}{2^{2 \si } \pi^{n / 2} \Gamma(\si )}
		\int_{\Sn } \frac{K(\eta)
			v_{i}(\eta)^{\frac{n+2\si }{n-2\si }}}{|\xi-\eta|^{n-2\si }}\,
		\ud \eta \quad \text { on }\, \Sn .
		\end{align}
		Let $F$ be the stereographic projection	with $q^{(j)}$ being the south pole:
		\begin{align*}
		F:\mathbb{R}^n &\longrightarrow \Sn  \backslash \{-q^{(j)}\},\\
		x&\longmapsto\Big(\frac{2x}{1+|x|^2},\frac{|x|^2-1}{|x|^2+1}\Big),
		\end{align*}
		and its Jacobi determinant takes $|J_F|=(\frac{2}{1+|x|^2})^n$. Via the stereographic projection, the equation \eqref{ieq}
		is translated to
		$$
		u_{i}(x)=\frac{\Gamma(\frac{n+2 \si }{2})}{2^{2 \si } \pi^{n / 2} \Gamma(\si )}
		\int_{\mathbb{R}^n}
		\frac{\wdt{K}(y)u_{i}(y)^{\frac{n+2\si }{n-2\si }}}{|x-y|^{n-2\si }}\,\ud y
		\quad
		\text{ on }\, \mathbb{R}^{n},
		$$
		where
		\be\label{ste-p}	
		u_i(x)=H(x)v_i(F(x)), \quad \widetilde{K}(x)=K(F(x)),\quad H(x)=|J_F(x)|^{\frac{n-2 \si }{2 n}}=\Big(\frac{2}{1+|x|^2}\Big)^{\frac{n-2\si }{2}}.
		\ee
Let $x_{i}^{(j)}$ be the local maximum of $u_{i}$ and $x_{i}^{(j)}\rightarrow x^{(j)}=0.$
It follows from Propositions \ref{pro-isbp-upb} and \ref{pro-isbp-lim} that
		\begin{align}
		\begin{aligned}\label{uiu-lim}
		u_{i}(x_{i}^{(j)})u_{i}(x)\rightarrow h^{(j)}(x) :=a
		K(q^{(j)})^{\frac{2\si -n}{2\si }}|x|^{2\si -n}&+b^{(j)}(x)\\
		\quad& \text { in }\, C_{{loc}}^{2}(\mathbb{R}^{n}
		\backslash \cup_{\ell=1}^{k}\{x^{(\ell)}\}),
		\end{aligned}
		\end{align}
		where
		\begin{align}\label{uiu-lim-a}
		a=2^{n}c(n,\si )c_{n,\si }
		\int_{\mathbb{R}^{n}}\Big(\frac{1}{1+|y|^{2}}\Big)^{\frac{n+2\si }{2}}\, \ud y
		=
		2^{n-1}c(n,\si )c_{n,\si }B(\delta,n/2)=2^{n-2\si },
		\end{align}
		and
		$b^{(j)}(y)>0$ is some regular harmonic function in $\mathbb{R}^{n}\backslash \cup_{\ell\ne j}\{x^{(\ell)}\}$.
		Coming back to $v_{i}$ on $\Sn $, by \eqref{ste-p}, we have
		$$
		\lim_{i\to\infty}v_{i}(q_{i}^{(j)})v_{i}(q)=
		\lim_{i\to\infty}\Big(\frac{1+|x|^{2}}{2^{2}}\Big)^{\frac{n-2\si }{2}}
		u_{i}(x_{i}^{(j)})u_{i}(x).
		$$
		Thus, for $q\ne q^{(j)}$ and close to $q^{(j)}$,
		\begin{align*}
		\lim_{i\to\infty}v_{i}(q^{(j)}_{i})v_{i}(q)= a 2^{\frac{2\si -n}{2}}
		K(q^{(j)})^{\frac{2\si -n}{2\si }}
		G_{q^{(j)}}(q)+&\wdt{b}^{(j)}(q)\\
		&\quad \text{in } C_{loc}^{2}(\Sn \backslash
		\{q^{(1)},\ldots,q^{(k)}\}),
		\end{align*}
		where $G_{q^{(i)}}(q^{(j)})$ is as in \eqref{gf},
		and $\wdt{b}^{(j)}(q)$ is some regular function near $q^{(j)}$
		satisfying $P_{\si }\wdt{b}^{(j)}=0$.
		
		Then, taking into account the contribution of all
		the poles, we deduce that
		\begin{align}
		\lim_{i\to\infty}v_{i}(q^{(j)}_{i})v_{i}(q)
		= a 2^{\frac{2\si -n}{2}}
		\Big\{
		\frac{G_{q^{(j)}}(q)}{K(q^{(j)})^{\frac{n-2\si }{2\si }}}
		+\sum_{\ell\ne j}\lim_{i\to\infty}
		\frac{v_{i}(q^{(j)}_{i})}{v_{i}(q_{i}^{(\ell)})}
		\frac{G_{q^{(\ell)}}(q)}{K(q^{(\ell)})^{\frac{n-2\si }{2\si }}}
		\Big\}.
		\end{align}
		It follows that for $|x|>0$ small,
		\begin{align}
		&\lim_{i\to\infty}u_{i}(x_{i}^{(j)})
		u_{i}(x)\notag\\
		=&
		aK(q^{(j)})^{\frac{2\si -n}{2\si }}|x|^{2\si -n}
		+  a 2^{\frac{n-2\si }{2}}
		\sum_{\ell\ne j}\lim_{i\to\infty}
		\frac{v_{i}(q^{(j)}_{i})}{v_{i}(q_{i}^{(\ell)})}
		\frac{G_{q^{(\ell)}}(q^{(j)})}{K(q^{(\ell)})^{\frac{n-2\si }{2\si }}}
		+O(|x|)\notag\\
		=&:h^{(j)}(x).\label{uiu}
		\end{align}

		For sufficiently small $\delta>0,$  $u_{i}$ satisfy
		\begin{align}\label{po-ui}
		u_{i}(x)=c_{n,\si }c(n,\si )
		\int_{B_{\delta}(x_{i}^{(j)})}
		\frac{\wdt{K}(y)u_{i}(y)^{\frac{n+2\si }{n-2\si }}}{|x-y|^{n-2\si }}\,\ud y
		+h_{\delta}(x),
		\end{align}
		where
		\be\label{po-hdelta}
		h_{\delta}(x):=c_{n,\si }c(n,\si )
		\int_{\mathbb{R}^{n}\backslash B_{\delta}(x_{i}^{(j)})}
		\frac{\wdt{K}(y)u_{i}(y)^{\frac{n+2\si }{n-2\si }}}{|x-y|^{n-2\si }}\,\ud y.
		\ee
		By Proposition \ref{pro-pz}, we have
		\begin{align}
		&-\frac{2n}{n-2\si }
		\int_{B_{\delta}(x_{i}^{(j)})}(x-x_{i}^{(j)}) \cdot
		\nabla\wdt{K}(x) u_{i}(x)^{\frac{2n}{n-2\si }}\,\ud x\notag\\
		=&\frac{n-2\si }{2}\int_{B_{\delta}(x_{i}^{(j)})}
		\wdt{K}(x)u_{i}(x)^{\frac{n+2\si }{n-2\si }}h_{\delta}(x)\,\ud x\notag\\
		&\quad+\int_{B_{\delta}(x_{i}^{(j)})}(x-x_{i}^{(j)})\cdot\nabla h_{\delta}(x)
		\wdt{K}(x)u_{i}(x)^{\frac{n+2\si }{n-2\si }}\,\ud x\notag\\
		&\quad-\frac{2n}{n-2\si }\delta\int_{\partial B_{\delta}(x_{i}^{(j)})}
		\wdt{K}(x)u_{i}(x)^{\frac{2n}{n-2\si }}\,\ud s.	\label{pohoz}
		\end{align}
		
		Let $m_{ij}:=u_i(x_i^{(j)}),$
		by \cite[Lemma 2.18]{jlxm}, we have
		\begin{align*}
		|\nabla \wdt{K}(x_{i}^{(j)})|\leq Cm_{ij}^{-\frac{2(n-2\si -1)}{n-2\si }}.
		\end{align*}
		On the other hand, it follows from \eqref{kt} and \eqref{q1} that
		\begin{align*}
		\nabla\wdt{K}(x_{i}^{(j)})=
		\nabla Q_{(q^{(j)})}^{(n-2\si )}(x_i^{(j)})+
		o_{\delta}(1)|x_{i}^{(j)}|^{n-2\si -1},
		\end{align*}
		then, we  obtain
		\begin{align}\label{xij}
		|x_i^{(j)}|^{n-2\si -1}\leq C m_{ij}^{-\frac{2(n-2\si -1)}{n-2\si }}.
		\end{align}
		It follows from the above that
		\begin{align}
		&m_{ij}^{2}\Big|\int_{B_{\delta}}y\cdot \nabla R_{(q^{(j)})}(y+x_{i}^{(j)})
		u_{i}(y+x_{i}^{(j)})^{\frac{2n}{n-2\si }}\,\ud y\Big|\notag\\
		\leq &m_{ij}^{2}o_{\delta}(1)
		\int_{B_{\delta}}|y||y+x_{i}^{(j)}|^{n-2\si -1}u_i(y+x_i^{(j)})^{\frac{2n}{n-2\si }}\,\ud y\notag\\
		\leq& o_{\delta}(1)\int_{B_{\delta}}(|y|^{n-2\si }+|y||x_i^{(j)}|^{n-2\si -1})
		u_i(y+x_{i}^{(j)})^{\frac{2n}{n-2\si }}\,\ud y\notag\\
		=&o_{\delta}(1).\label{po-gk-re}
		\end{align}
		
		For  the left hand side of \eqref{pohoz},
		by Proposition \ref{pro-ibp-upb}, Proposition \ref{pro-esti}, \eqref{po-gk-re},
		and letting $i\to \infty$, we have
		\begin{align}
	&-\frac{2n}{n-2\si }
		m_{ij}^{2}\int_{B_{\delta}(x_{i}^{(j)})}(x-x_{i}^{(j)}) \cdot
		\nabla\wdt{K}(x) u_{i}(x)^{\frac{2n}{n-2\si }}\,\ud x\notag\\
		=&-\frac{2n}{n-2\si }m_{ij}^{2}
		\int_{B_{\delta}}y\cdot\nabla\wdt{K}(y+x_{i}^{(j)})u_{i}(y+x_{i}^{(j)})^{\frac{2n}{n-2\si }}\,\ud y\notag\\
		=&-\frac{2n}{n-2\si }m_{ij}^{2}\int_{B_{\delta}}y\cdot \nabla Q_{(q^{(j)})}^{(n-2\si )}(y+x_{i}^{(j)})\,\ud y+o_{\delta}(1)\notag\\
		=&-\frac{2n}{n-2\si }\int_{\mathbb{R}^{n}}
		\frac{z\cdot \nabla Q_{(q^{(j)})}^{(n-2\si )}(z+\xi^{(j)})}{(1+k^{(j)}|z|^{2})^{n}}\,\ud z
		+o_{\delta}(1),	\label{po-gk}
		\end{align}
		where $\xi^{(j)}=\lim_{i\to\infty}m_{ij}^{\frac{2}{n-2\si }}x_{i}^{(j)}$ and
		$k^{(j)}=K(q^{(j)})^{1/\si }/4$.
		
		For  the first term on the right hand side of \eqref{pohoz}, by Proposition \ref{pro-ibp-upb},
		Proposition \ref{pro-isbp-lim}, \eqref{uiu}, and letting $i\to \infty$, we have
		\begin{align}
		&m_{ij}^{2}\frac{n-2\si }{2}\int_{B_{\delta}(x_{i}^{(j)})}
		\wdt{K}(x)u_{i}(x)^{\frac{n+2\si }{n-2\si }}h_{\delta}(x)\,\ud x \notag\\
		=&m_{ij}^{2}\frac{n-2\si }{2}
		\int_{B_{\delta}(x_{i}^{(j)})}(\wdt{K}(x_{i}^{(j)})
		+(x-x_{i}^{(j)})\cdot \nabla\wdt{K}(x_{i}^{(j)})+O(|x-x_{i}^{(j)}|^{2}))
		h_{\delta}(x)
		u_{i}(x)^{\frac{n+2\si }{n-2\si }}\,\ud x\notag\\
		=&\frac{n-2\si }{2}
		\wdt{K}(x_{i}^{(j)})
		\int_{|y|\leq R_{i}}(m_{ij}^{-1} u_{i}(m_{ij}^{\frac{-2}{n-2\si }}y
		+x_{i}^{(j)}))^{\frac{n+2\si }{n-2\si }}h_{\delta}(m_{ij}^{-1}y+x_{i}^{(j)})\,\ud y+o(1)\notag\\
		=&\frac{n-2\si }{2}K(q^{(j)})\int_{\mathbb{R}^{n}}
		\frac{ b^{(j)}(0)}{(1+k^{(j)}|y|^{2})^{\frac{n+2\si }{2}}}\,\ud y.\label{po-bj0}
		\end{align}

		A direct calculation gives that when $|x-x_{i}^{(j)}|<\delta,$
		\begin{align}\label{gra-h-e}
		|\nabla h_{\delta}(x)|\leq
		\begin{cases}
		\displaystyle C\frac{|\delta^{2\si -1}-(\delta-|x-x_{i}^{(j)}|)^{2\si -1}|}
		{2\si -1}m_{ij}^{-1} & \text{ if }\, \si \ne{1}/{2}, \\
		\displaystyle C|\log \delta -\log(\delta-|x-x_{i}^{(j)}|)|m_{ij}^{-1} & \text{ if } \, \si ={1}/{2}.
		\end{cases}
		\end{align}
		For the second term on the right hand side of \eqref{pohoz},
		from \eqref{gra-h-e} and Proposition \ref{pro-ibp-upb}, we have
		\begin{align}
		&m_{ij}^{2}
		\Big|\int_{B_{\delta}(x_{i}^{(j)})}(x-x_{i}^{(j)})\cdot\nabla h_{\delta}(x)
		\wdt{K}(x)u_{i}(x)^{\frac{n+2\si }{n-2\si }}\,\ud x\Big|\notag\\
		\leq &Cm_{ij}
		\int_{B_{\delta}(x_{i}^{(j)})}|x-x_{i}^{(j)}|
		u_{i}(x)^{\frac{n+2\si }{n-2\si }}\,\ud x\notag\\
		=&Cm_{ij}^{1-\frac{2}{n-2\si }-\frac{2n}{n-2\si }+\frac{n+2\si
			}{n-2\si }}\int_{|y|<R_{i}}|y|(m_{ij}^{-1}
		u_{i}(m_{ij}^{-\frac{2}{n-2\si }}y+x_{i}^{(j)})^{\frac{n+2\si }{n-2\si }}\,\ud y\notag\\
		=&m_{ij}^{-\frac{2}{n-2\si }}\int_{\mathbb{R}^{n}}
		\frac{|y|}{(1+k^{(j)}|y|^{2})^{\frac{n+2\si }{2}}}\,\ud y=o(1).	\label{po-gh}
		\end{align}

		For the third term on the right side of \eqref{pohoz},
		 using Proposition \ref{pro-esti}, we have,
		\begin{align}
		&\lim_{i\to\infty}\Big|-m_{ij}^{2}\frac{2n}{n-2\si }\delta\int_{\partial B_{\delta}(x_{i}^{(j)})}
		\wdt{K}(x)u_{i}(x)^{\frac{2n}{n-2\si }}\,\ud s\Big|\notag\\
		\leq & C(\delta)\lim_{i\to \infty}
		m_{ij}^{2}m_{ij}^{-\frac{2n}{n-2\si }}=0.\label{po-par-b}
		\end{align}
		
		Let
		\begin{align}\label{lamb}
		\lambda_{j}:=K(q^{(j)})^{\frac{1-n+2\si }{2\si }}
		\lim_{i\to \infty }v_{i}(q^{(1)}_{i})v_{i}(q^{(j)}_{1}).
		\end{align}
		It follows from Propositions \ref{pro-ibp-lowb} and \ref{pro-isbp-upb}
		that $0<\lambda_{j}<\infty$.
		Therefore, by \eqref{pohoz}, \eqref{po-gk}, \eqref{po-bj0},
		\eqref{po-par-b}, \eqref{po-gh} and letting $\delta\to 0,$
		we have
		\begin{align}\label{Meq}
		-\frac{2n}{n-2\si }\int_{\mathbb{R}^{n}}
		\frac{z\cdot \nabla Q_{(q^{(j)})}^{(n-2\si )}(z+\xi^{(j)})}{(1+k^{(j)}|z|^{2})^{n}}\,\ud z
		=\frac{n-2\si }{2}K(q^{(j)})\int_{\mathbb{R}^{n}}
		\frac{ b^{(j)}(0)}{(1+k^{(j)}|y|^{2})^{\frac{n+2\si }{2}}}\,\ud y.
		\end{align}
		
		By \eqref{uiu-lim-a} and \eqref{lamb}, we have
		\begin{align}\label{uiu-lim-b}
		b^{(j)}(0)=2^{\frac{3(n-2\si )}{2}}\sum_{\ell\ne j}\frac{\lambda_{\ell}}{\lambda_{j}}
		\frac{K(q^{(j)})^{\frac{1-n+2\si }{2\si }}}{K(q^{(\ell)})^{\frac{1}{2\si }}}
		G_{q^{(\ell)}}(q^{(j)}).
		\end{align}
		Substituting \eqref{uiu-lim-b} into \eqref{Meq} and
		making a change of variable, we have
		\begin{align*}
		&-\frac{2n}{n-2\si }2^{2(n-\si )}K(q^{(j)})^{\frac{\si -n}{\si }}
		\int_{\mathbb{R}^{n}}\frac{y\cdot
			\nabla Q_{q^{(j)}}^{(n-2\si )}(y+\sqrt{k^{j}}\xi^{(j)})}{(1+|y|^{2})^{n}}\,\ud y\\
		=&(n-2\si )2^{n-2+\frac{3(n-2\si )}{2}}\frac{2\pi^{n/2}}{\Gamma(\si +n/2)}
		\sum_{\ell\ne j}\frac{\lambda_{\ell}}{\lambda_{j}}
		\frac{K(q^{(j)})^{\frac{1-2n+4\si }{2\si }}}{K(q^{(\ell)})^{\frac{1}{2\si }}}
		G_{q^{(\ell)}}(q^{(j)}),
		\end{align*}
		where $\eta^{(j)}:=\sqrt{k^{(j)}}\xi^{(j)}.$ It follows that
		\begin{align}
		&-K(q^{(j)})^{-\frac{1+\si }{\si }}\lambda_{j}
		\int_{\mathbb{R}^{n}}\frac{y\cdot
			\nabla Q_{q^{(j)}}^{(n-2\si )}(y+\eta^{(j)})}{(1+|y|^{2})^{n}}\,\ud y\notag\\
		=&\frac{2^{\frac{n-2\si }{2}}(n-2\si )^2}{4n}
		\frac{\pi^{n/2}}{\Gamma(\si +\frac{n}{2})}
		\sum_{\ell\ne j}
		\frac{G_{q^{(\ell)}}(q^{(j)})}{K(q^{(\ell)})^{\frac{1}{2\si }}
			K(q^{(j)})^{\frac{1}{2\si }}}
		\lambda_{\ell}.\label{M-lambda}
		\end{align}
		We next prove that $q^{(j)}$, $\eta^{(j)}$ ($j=1,\ldots,k$),
		satisfy \eqref{gkn-2s} with $q_0=q^{(j)}$, $\eta_0=\eta^{(j)}$.
		In fact, due to \eqref{M-lambda}, we only need to prove
		\begin{align}\label{eq-eta-q}
		\int_{\mathbb{R}^{n}} \nabla Q_{(q^{(j)})}^{(n-2\si )}(y+\eta^{(j)})(1+|y|^{2})^{-n} \,\ud y=0.
		\end{align}

		We first claim that
		\begin{align}\label{gra-k-ui}
		\int_{B_{\delta}}\nabla \wdt{K}(x+x_{i}^{(j)})
		u_{i}(x+x_{i}^{(j)})^{\frac{2n}{n-2\si }}\,\ud x=O(m_{ij}^{-2}).
		\end{align}
		Indeed, by using symmetry, we have
		\begin{align*}
		&\frac{n-2\si }{2n}\int_{B_{\delta}(x_i^{(j)})}\wdt{K}(x)\nabla u_{i}(x)^{\frac{2n}{n-2\si }}\,
		\ud x\\
		=&\int_{B_{\delta}(x_{i}^{(j)})} \wdt{K}(x)u_{i}(x)^{\frac{n+2\si }{n-2\si }}
		\nabla u_{i}(x)\,
		\ud x\\
		=&(2\si -n)\int_{B_{\delta}(x_{i}^{(j)})} \wdt{K}(x)u_{i}(x)^{\frac{n+2\si }{n-2\si }}
		\int_{B_{\delta}(x_{i}^{(j)})}
		\frac{(x-y)\wdt{K}(y)u_i(y)^{\frac{n+2\si }{n-2\si }}}{|x-y|^{n-2\si +2}}\,\ud y\, \ud x\\
		&\quad
		+\int_{B_{\delta}(x_{i}^{(j)})}\wdt{K}(x)u_i(x)^{\frac{n+2\si }{n-2\si }}
		\nabla h_{\delta}(x)\,\ud x\\
		=&\int_{B_{\delta}(x_{i}^{(j)})}\wdt{K}(x)u_i(x)^{\frac{n+2\si }{n-2\si }}
		\nabla h_{\delta}(x)\,\ud x,
		\end{align*}
		and by the divergence theorem,
		\begin{align*}
		&\int_{B_{\delta}(x_i^{(j)})}\wdt{K}(x)
		\nabla u_i(x)^{\frac{2n}{n-2\si }}\,\ud x\\
		=&-\int_{B_{\delta}(x_i^{(j)})}\nabla\wdt{K}(x) u_{i}(x)^{\frac{2n}{n-2\si }}\,\ud x
		+\delta\int_{\partial B_{\delta}(x_i^{(j)})}
		\wdt{K}(x)u_{i}(x)^{\frac{2n}{n-2\si }}(x-x_{i}^{(j)})\,\ud s.
		\end{align*}
		It follows that
		\begin{align*}
		&-\frac{n-2\si }{2n}
		\int_{B_{\delta}(x_i^{(j)})}\nabla\wdt{K}(x) u_{i}(x)^{\frac{2n}{n-2\si }}\,\ud x\\
		&\quad+\frac{n-2\si }{2n}\delta\int_{\partial B_{\delta}(x_i^{(j)})}
		\wdt{K}(x)u_{i}(x)^{\frac{2n}{n-2\si }}(x-x_{i}^{(j)})\,\ud s\\
		=&\int_{B_{\delta}(x_{i}^{(j)})}\wdt{K}(x)u_i(x)^{\frac{n+2\si }{n-2\si }}
		\nabla h_{\delta}(x)\,\ud x.
		\end{align*}
		Then by using Proposition \ref{pro-isbp-upb} and \eqref{gra-h-e}, we have
		\begin{align*}
		&\int_{B_{\delta}(x_i^{(j)})}\nabla\wdt{K}(x) u_{i}(x)^{\frac{2n}{n-2\si }}\,\ud x\\
		=&\delta\int_{\partial B_{\delta}(x_i^{(j)})}
		\wdt{K}(x)u_{i}(x)^{\frac{2n}{n-2\si }}(x-x_{i}^{(j)})\,\ud s-\frac{2n}{n-2\si }\int_{B_{\delta}(x_{i}^{(j)})}\wdt{K}(x)u_i(x)^{\frac{n+2\si }{n-2\si }}
		\nabla h_{\delta}(x)\,\ud x\\
		=&O(m_{ij}^{-2}).
		\end{align*}
		Therefore, \eqref{gra-k-ui} can be obtained from the above.

		Multiplying \eqref{gra-k-ui} by $m_{ij}^{\frac{2}{n-2\si }(n-2\si -1)}$,
		we have
		\begin{align}\label{pro-eq3}
		&\int_{B_{\delta}}\nabla Q_{(q^{(j)})}^{(n-2\si )}(m_{ij}^{\frac{2}{n-2\si }}
		x+m_{ij}^{\frac{2}{n-2\si }}x_{i}^{(j)})u_{i}(x+x_{i}^{(j)})^{\frac{2n}{n-2\si }}\,\ud x\notag\\
		=&o_{\delta}(1)\int_{B_{\delta}}|m_{ij}^{\frac{2}{n-2\si }}
		x+m_{ij}^{\frac{2}{n-2\si }}x_{i}^{(j)}|^{n-2\si -1}
		u_{i}(x+x_{i}^{(j)})^{\frac{2n}{n-2\si }}\,\ud x+o(1).
		\end{align}
		By using \eqref{xij} and Proposition \ref{pro-esti}, we have
		\begin{align}
		&m_{ij}^{\frac{2}{n-2\si }(n-2\si -1)}\Big|\int_{B_{\delta}}
		\nabla R_{(q^{(j)})}(x+x_{i}^{(j)}) u_{i}(x+x_i^{(j)})^{\frac{2n}{n-2\si }}\,\ud x\Big|\notag\\
		\leq &m_{ij}^{\frac{2}{n-2\si }(n-2\si -1)}\int_{B_{\delta}}
		|x+x_i^{(j)}|^{n-2\si -1} u_{i}(x+x_i^{(j)})^{\frac{2n}{n-2\si }}\,\ud x\notag\\
		\leq &
		m_{ij}^{\frac{2}{n-2\si }(n-2\si -1)}\int_{B_{\delta}}|x|^{n-2\si -1}
		u_{i}(x+x_i^{(j)})^{\frac{2n}{n-2\si }}\,\ud x\notag\\
		&+
		m_{ij}^{\frac{2}{n-2\si }(n-2\si -1)}\int_{B_{\delta}}|x_i^{(j)}|^{n-2\si -1}
		u_{i}(x+x_i^{(j)})^{\frac{2n}{n-2\si }}\,\ud x \notag\\
		=& O(1),\label{pro-eq1}
		\end{align}
		and using Proposition \ref{pro-ibp-upb},
		\begin{align}
		&\int_{B_{\delta}}\nabla Q_{(q^{(j)})}^{(n-2\si )}(m_{ij}^{\frac{2}{n-2\si }}
		x+m_{ij}^{\frac{2}{n-2\si }}x_{i}^{(j)})u_{i}(x+x_{i}^{(j)})^{\frac{2n}{n-2\si }}\,\ud x\notag\\
		=&\int_{|x|\leq r_{i}}
		\nabla Q_{(q^{(j)})}^{(n-2\si )}(m_{ij}^{\frac{2}{n-2\si }}
		x+m_{ij}^{\frac{2}{n-2\si }}x_{i}^{(j)})u_{i}(x+x_{i}^{(j)})^{\frac{2n}{n-2\si }}\,\ud x+o(1)\notag\\
		=&\int_{|y|\leq R_{i}}
		Q_{(q^{(j)})}^{(n-2\si )}(y+m_{ij}^{\frac{2}{n-2\si }}x_{i}^{(j)})
		(m_{ij}^{-1}u_{i}(m_{ij}^{\frac{2}{n-2\si }}y+x_{i}^{(j)}))^{\frac{2n}{n-2\si }}\,\ud y+o(1).\label{pro-eq2}
		\end{align}
		By \eqref{pro-eq3}--\eqref{pro-eq2},
		and letting $i\to \infty$ and $\delta\to 0$, we have
		\begin{align*}
		\int_{\mathbb{R}^{n}}\nabla Q_{(q^{(j)})}^{(n-2\si )}(z+\xi^{(j)})(1+k^{(j)}|z|^{2})^{-n}\,\ud z
		=0.
		\end{align*}
		Making a change of variable, we  establish \eqref{eq-eta-q}.
		Theorem \ref{pro-lambda-exi} follows from \eqref{M-lambda} and \eqref{eq-eta-q}.
	\end{proof}

	\section{Perturbation method  for existence results}\label{sec:3}
	In this section, we use  the method in  \cite{JLXOn2015} to
	establish a perturbation result  Theorem \ref{prop-theorem3.1}.
	Similar results in the classical Nirenberg problem were obtained in \cite{perturbation,ChangYangPrescribing1987,LPrescribing1995}. In this section, we only concern with the case $1\leq \si<n/2$, since the proof of $0<\si<1$ can be found in \cite[Section 3]{JLXOn2015}.

	For a  conformal transformation $\varphi_{P, t}$  (see  \eqref{eq:varphiPt2}),
	we let
$$
	T_{\varphi_{P, t}} v=v \circ \varphi_{P, t}|\operatorname{det} \mathrm{d} \varphi_{P, t}|^{
\frac{n-2\si}{2 n}},
	$$
where $ \mathrm{d} \varphi_{P, t}$ denotes the Jacobian of $\varphi_{P, t}$ satisfying
$$
\varphi_{P,t}^{*} g_{0}=|\det \ud\varphi_{P,t}|^{2/n} g_{0}.
$$
	Let
	\begin{align*}
	\mathscr{S}&:=\Big\{v \in H^{\si}(\Sn):\dashint_{\Sn} |v|^{\frac{2 n}{n-2 \si}} \,\mathrm{d}  \mathrm{vol}_{g_{0}}=1\Big\},  \\
	\mathscr{S}_{0}&:=\Big\{v \in \mathscr{S}:\dashint_{\Sn}  x|v|^{\frac{2 n}{n-2 \si}} \,\mathrm{d}  \mathrm{vol}_{g_{0}}=0\Big\}.
	\end{align*}
	For $w \in \mathscr{S}_{0}$ and $p\in  B^{n+1}$,  let  $\pi (w,p)$ be defined by $\pi(w,0)=w$  and  $\pi(w, p)=T_{\varphi_{P, t}}^{-1} w$.  It can be checked that the map $\pi: \mathscr{S}_{0} \times B^{n+1} \rightarrow \mathscr{S}$ is a $C^{2}$ diffeomorphism, see \cite{LPrescribing1995}.
	
	
	It is easy to see that $1\in \mathscr{S} \cap \mathscr{S}_{0}$.  By some direct computations and  elementary properties of spherical harmonics,  we have
	$$
	T_{1} \mathscr{S}=\Big\{\phi: \int_{\Sn} \phi=0\Big\}=\operatorname{span}\{\text {spherical harmonics of degree} \geq 1\},
	$$
	and  	$$
	T_{1} \mathscr{S}_{0}=\operatorname{span}\{\text {spherical harmonics of degree} \geq 2\},
	$$
	where 	$T_{1} \mathscr{S}$ and 	$T_{1} \mathscr{S}_{0}$ denote the tangent space of at $1$, respectively.
	
	Let us use $\widetilde{w} \in T_{1} \mathscr{S}_{0}$ as local
	coordinates of $w \in \mathscr{S}_{0}$ near $w=1$,
	and $\widetilde{w}=0$ corresponds to $w=1$. Using implicit function theorem, it is easy to check that $\mathscr{S}_{0}$ is represented locally near $1$ as a graph over $T_{1} \mathscr{S}_{0}$:  For all $\widetilde{w} \in T_{1} \mathscr{S}_{0}$, $\widetilde{w}$ sufficiently  close to 0, there is a twice differentiable map $\mu(\widetilde{w}) \in \mathbb{R}$, $\eta(\widetilde{w}) \in \mathbb{R}^{n+1}$ defined in a neighborhood of 0 in $T_{1} \mathscr{S}_{0}$ with $\mu(0)=0$, $\eta(0)=0$, $D \mu(0)=0$, and $D \eta(0)=0$
	such that 
	\begin{align*}
	\dashint_{\Sn} |1+\widetilde{w}+\mu+\eta \cdot x|^{2 n /(n-2 \si)}=1
	\end{align*}
	and
	\begin{align*}
	\dashint_{\Sn}|1+\widetilde{w}+\mu+\eta \cdot x|^{2 n /(n-2 \si)} x=0.
	\end{align*}
	
	

	Now we consider a functional on $\mathscr{S}$:
	\begin{align}\label{EK}
	E_{K}(v)=\frac{\dashint_{\Sn } v P_{\si }(v) \,\mathrm{d} \mathrm{v o l}_{g_{0}}}
	{(\dashint_{\Sn } K|v|^{2 n /(n-2 \si )}\,
		\mathrm{d} \mathrm{v o l}_{g_{0}})^{(n-2 \si ) / n}}.
	\end{align}
	We have
	\begin{proposition}\label{lem3.6}
		Let $n\geq 3$, $1\leq \si<n/2$, and  $K\in C^{1}(\Sn )$ be a positive function.
		There exist some constants $\varepsilon_{1}=\varepsilon_{1}(n, \si )>0$ and
		$\varepsilon_{2}=\varepsilon_{2}(n, \si )>0$, such that,
		if $\|K-1\|_{L^{\infty}(\Sn )}  \leq \varepsilon_{1}$, then
		$$ \min _{w \in \mathscr{S}_{0},\,\|w-1\|_{H^{\si}(\Sn)} \leq \varepsilon_{2}} E_{K}(w)$$
		has a unique minimizer $w_{K}>0$ .
		Furthermore, $D^{2} E_{K}|_{\mathscr{S}_{0}}(w_{K})$ is positive definite,
		and there exists a constant $C=C(n,\si)$  such that
		\begin{align}\label{wk-1}
		\|w_{K}-1\|_{H^{\si}(\Sn)} \leq C
		\inf _{c \in \mathbb{R}}\|K-c\|_{L^{2 n /(n+2 \si )}(\mathbb{S}^n)}.
		\end{align}
	\end{proposition}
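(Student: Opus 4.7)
The plan is to pass to the local graph coordinates on $\mathscr{S}_{0}$ near $w=1$ and apply the implicit function theorem together with a Taylor expansion argument. The pivotal fact is that the Hessian of $E_{1}$ at $w=1$ is uniformly positive on the tangent space $T_{1}\mathscr{S}_{0}$.

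\textbf{Step 1 (Coercivity of the Hessian at the base point).} A direct computation of $DE_{K}(1)[\phi]$ and $D^{2}E_{K}(1)[\phi,\phi]$, using $P_{\si}(1)=c(n,\si)$ (a constant), yields
\begin{align*}
DE_{K}(1)[\phi] \; \propto \; \dashint_{\Sn}\phi\,\ud\mathrm{vol}_{g_{0}} \; - \; \frac{\dashint_{\Sn}K\phi\,\ud\mathrm{vol}_{g_{0}}}{\dashint_{\Sn}K\,\ud\mathrm{vol}_{g_{0}}},
\end{align*}
which vanishes when $K\equiv 1$ and $\phi\in T_{1}\mathscr{S}$, and an analogous formula for $D^{2}E_{1}(1)$ proportional to $\dashint_{\Sn}\phi P_{\si}(\phi) - c(n,\si)\frac{n+2\si}{n-2\si}\dashint_{\Sn}\phi^{2}$. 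Since the eigenvalues of $P_{\si}$ on spherical harmonics of degree $k\geq 0$ form a strictly increasing sequence whose value at $k=1$ exactly matches $c(n,\si)\frac{n+2\si}{n-2\si}$, and $T_{1}\mathscr{S}_{0}$ consists of spherical harmonics of degree $\geq 2$, the Hessian $D^{2}E_{1}(1)|_{\mathscr{S}_{0}}$ is uniformly positive definite. This is precisely the spectral gap behind Beckner's sharp inequality \eqref{sobolev}: the degree-$0$ and degree-$1$ directions are the null space, corresponding to the constraints $\dashint_{\Sn}|v|^{2n/(n-2\si)}=1$ and $\dashint_{\Sn}x|v|^{2n/(n-2\si)}=0$ that define $\mathscr{S}_{0}$.

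\textbf{Step 2 (Existence, uniqueness and positive Hessian for $w_{K}$).} Parametrizing $w\in\mathscr{S}_{0}$ near $1$ by $\widetilde{w}\in T_{1}\mathscr{S}_{0}$ via $w=1+\widetilde{w}+\mu(\widetilde{w})+\eta(\widetilde{w})\cdot x$, the critical-point equation $DE_{K}|_{\mathscr{S}_{0}}(w)=0$ becomes an equation $F(\widetilde{w},K)=0$ with $F:T_{1}\mathscr{S}_{0}\times L^{\infty}(\Sn)\to T_{1}\mathscr{S}_{0}$ smooth, $F(0,1)=0$, and $D_{\widetilde{w}}F(0,1)$ invertible by Step 1. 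The implicit function theorem then produces, for $\|K-1\|_{L^{\infty}}\leq\va_{1}$, a unique $\widetilde{w}_{K}$ in the $\va_{2}$-ball of $T_{1}\mathscr{S}_{0}$, hence a unique critical point $w_{K}\in\mathscr{S}_{0}$. By continuity in $(w,K)$, $D^{2}E_{K}|_{\mathscr{S}_{0}}(w_{K})$ remains positive definite, so $w_{K}$ is the unique local minimizer. Positivity $w_{K}>0$ is obtained by shrinking $\va_{2}$ (after bootstrap regularity for the Euler--Lagrange equation) so that $w_{K}$ stays close to $1$ in $C^{0}$.

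\textbf{Step 3 (The bound \eqref{wk-1}).} For any $\phi\in T_{1}\mathscr{S}_{0}$ and any $c\in\R$, the orthogonality $\dashint_{\Sn}\phi=0$ gives $\dashint_{\Sn}c\phi=0$, hence from Step 1
\begin{align*}
|DE_{K}(1)[\phi]| \; = \; C\Big|\dashint_{\Sn}(K-c)\phi\,\ud\mathrm{vol}_{g_{0}}\Big| \; \leq \; C\|K-c\|_{L^{2n/(n+2\si)}(\Sn)}\|\phi\|_{L^{2n/(n-2\si)}(\Sn)}
\end{align*}
by H\"older, and Beckner's embedding $H^{\si}(\Sn)\hookrightarrow L^{2n/(n-2\si)}(\Sn)$ upgrades this to a bound by $C\|K-c\|_{L^{2n/(n+2\si)}}\|\phi\|_{H^{\si}}$. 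Testing $F(\widetilde{w}_{K},K)=0$ against $\widetilde{w}_{K}$ and using the Taylor expansion $DE_{K}(w_{K})[\widetilde{w}_{K}] - DE_{K}(1)[\widetilde{w}_{K}] = D^{2}E_{K}(1)[\widetilde{w}_{K},\widetilde{w}_{K}] + O(\|\widetilde{w}_{K}\|_{H^{\si}}^{3})$, together with the uniform coercivity from Step 1, yields $\|\widetilde{w}_{K}\|_{H^{\si}}\leq C\inf_{c\in\R}\|K-c\|_{L^{2n/(n+2\si)}}$. Since $\mu(\widetilde{w}_{K})$ and $\eta(\widetilde{w}_{K})$ are $O(\|\widetilde{w}_{K}\|_{H^{\si}}^{2})$, the same bound passes to $\|w_{K}-1\|_{H^{\si}}$, establishing \eqref{wk-1}. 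The main obstacle throughout is Step 1: one must explicitly identify the null directions of Beckner's sharp inequality with the span of degree-$0$ and degree-$1$ spherical harmonics (the infinitesimal generators of the conformal group together with the normalization) and match it with the kernel of the linearized constraint defining $\mathscr{S}_{0}$, thereby producing a quantitative spectral gap that drives every subsequent estimate.
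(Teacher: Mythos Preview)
Your proposal is correct and follows the same route as the paper: graph coordinates on $\mathscr{S}_0$ near $1$, coercivity of the Hessian of $E_1$ on $T_1\mathscr{S}_0$ via the spectral gap $\lambda_2>\lambda_1=\frac{n+2\si}{n-2\si}P_\si(1)$, $C^2$-perturbation of $E_K|_{\mathscr{S}_0}$ when $\|K-1\|_{L^\infty}$ is small, and the bound $\|DE_K|_{\mathscr{S}_0}(1)\|\le C\|K-c\|_{L^{2n/(n+2\si)}}$ via H\"older and Sobolev. The only place the paper proceeds differently is the positivity of $w_K$: instead of your bootstrap to $C^0$-closeness (which at the critical exponent is not entirely routine and would need a Brezis--Kato type argument), the paper first obtains $w_K\ge 0$ and then applies the integral representation $w_K=P_\si^{-1}\big((\lambda_K K-\Lambda_K\cdot x)\,w_K^{(n+2\si)/(n-2\si)}\big)$ with its positive kernel as a strong maximum principle to rule out interior zeros.
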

	
	\begin{proof}
		
		Using the conformal invariance of $P_{\si}$ and \eqref{EK}, for $\widetilde{w} \in T_{1} \mathscr{S}_{0}$ and $\widetilde{w}$   close to 0, we have
		$$
		\widetilde{E}(\widetilde{w}):=E_{1}(w)=\dashint_{\Sn}  w P_{\si }(w),
		$$
	where $w=1+\widetilde{w}+\mu(\widetilde{w})+\eta(\widetilde{w}) \cdot x$.

		It is well known (see \cite{mor})
		that $P_{\si }$ has eigenfunctions the spherical harmonics and eigenvalues
		\begin{align}\label{psi-eig}
		\lambda_{k}=\frac{\Gamma(k+\frac{n}{2}+\si )}{\Gamma(k+\frac{n}{2}-\si )}, \quad k \geq 0,
		\end{align}
		with multiplicity $(2 k+n-1)(k+n-2) ! /(n-1) ! k ! $.
		It follows that
		\begin{align}\label{ew1}
		\widetilde{E}(\widetilde{w})=P_{\si }(1)(1+2 \mu(\widetilde{w}))+\int_{\Sn } \widetilde{w} P_{\si }(\widetilde{w})+O(\|\widetilde{w}\|_{H^{\si }(\Sn )}^{2}).
		\end{align}
		Thus,
		\begin{align}\label{muw}
		\mu(\widetilde{w})
		&=-\frac{1}{2} \cdot \frac{n+2 \si }{n-2 \si } \int_{\Sn } \widetilde{w}^{2}+o(\|\widetilde{w}\|_{H^{\si }(\Sn )}^{2}).
		\end{align}
		Note that $\lambda_{1}=\frac{n+2 \si}{n-2 \si} P_{\si}(1)$ (see \eqref{psi-eig}), then,	by \eqref{ew1} and \eqref{muw},
		\begin{align}\label{3.7-1}
		\widetilde{E}(\widetilde{w})=P_{\si}(1)+\dashint_{\Sn}(\widetilde{w} P_{\si}(\widetilde{w})-\lambda_{1} \widetilde{w}^{2})+o(\|\widetilde{w}\|_{H^{\si }(\Sn )}^{2}).
		\end{align}
		Set $Q(\widetilde{w}):=\dashint_{\Sn}(\widetilde{w} P_{\si}(\widetilde{w})-\lambda_{1} \widetilde{w}^{2})$.
		It is clear  that for any $\widetilde{w}$,	$\widetilde{v} \in T_{1} \mathscr{S}_{0}$,
		\begin{align}\label{3.7-2}
		D^{2} Q(\widetilde{w})(\widetilde{v}, \widetilde{v}) =2\dashint_{\Sn} (\widetilde{v} P_{\si}(\widetilde{v})-\lambda_{1} \widetilde{v}^2)
		\geq 2\Big(1-\frac{\lambda_{1}}{\lambda_{2}}\Big)\|\widetilde{v}\|_{H^{\si }(\Sn )}^{2},
		\end{align}
		which means the quadratic form $Q(\widetilde{w})$ is positive definite in $T_{1} \mathscr{S}_{0}$. Furthermore,  we derive from Sobolev inequality \eqref{sobolev} that for some $\varepsilon_{0}=\varepsilon_{0}(n, \si)>0$,
		\begin{align}\label{3.7-3}
		\|E_{K}|_{\mathscr{S}_{0}}-E_{1}|_{\mathscr{S}_{0}}\|_{C^{2}(B_{\varepsilon_{0}}(1))} \leq O(\varepsilon),
		\end{align}
		provided $\|K-1\|_{L^{\infty}(\Sn)} \leq \varepsilon$. Here $B_{\varepsilon_{0}}(1)$ denotes the ball in $\mathscr{S}_{0}$ of radius $\varepsilon_{1}$ centered at 1. Then we verify by direct computations that,  for any $\widetilde{w} \in T_{1} \mathscr{S}_{0}$ and   any constant $c$,
		$$
		\langle D E_{K}|_{\mathscr{S}_{0}}(1), \widetilde{w}\rangle=-2 P_{\si}(1)
		\Big(\dashint_{\Sn} K\Big)^{(2 \si-2 n) / n}\dashint_{\Sn}(K-c) \widetilde{w}.
		$$
		Therefore,
		\begin{align}\label{3.7-4}
		\|D E_{K}|_{\mathscr{S}_{0}}(1)\| \leq C\|K-c\|_{L^{2 n /(n+2 \si)}(\Sn)}.
		\end{align}
		As a consequence, we see from \eqref{3.7-1}--\eqref{3.7-3}
		that the minimizing problem has a unique minimizer $w_{K}$ and
		$D^{2}E_{K}|_{\mathscr{S}_{0}}(w_{K})$ is positive definite. Estimate
		\eqref{wk-1} follows from \eqref{3.7-2}--\eqref{3.7-4} with  some standard functional analysis arguments.
		
		We are left to prove  the positivity of $w_K$.
		
		Since $w_{K}$ is a constrained local minimum,
		$w_{K}$ satisfies the Euler-Lagrange equation for some Lagrange multiplier $\Lambda_{K} \in \mathbb{R}^{n+1}$ :
		\begin{align}\label{lem-wk}
		P_{\si }(w_{K})=(\lambda_{K} K-\Lambda_{K} \cdot x)|w_{K}|^{4 \si  /(n-2 \si )} w_{K} \quad \text { on }\, \Sn ,
		\end{align}
		where
		$$
		\lambda_{K}=\frac{\dashint_{\Sn} w_{K} P_{\si }(w_{K})}{\dashint_{\Sn } K|w_{K}|^{\frac{2 n}{n-2 \si }}} .
		$$
		Then,	using the same 	argument in  \cite[Lemma 3.6]{JLXOn2015}, we   obtain  $w_{K}\geq 0$.	Note that equation \eqref{lem-wk} can be  equivalently rewritten as
		\begin{align*}
		w_{K}(\xi)=c_{n,\si}\int_{\Sn} \frac{(\lambda_{K} K(\eta)-\Lambda_{K} \cdot \eta)w_{K}(\eta)^{(n+2 \si) /(n-2 \si)}}{|\xi-\eta|^{n-2 \si}} \,\mathrm{d} \eta,
		\end{align*}
		by using \eqref{gr}.	If there exists some $\xi_{0}\in \Sn$ such that $w_{K}(\xi_{0})=0$,
		then using the facts
		$\|K-1\|_{L^{\infty}(\Sn)}\leq \varepsilon$, $|\lambda_{k}-c(n,\si)|=O(\varepsilon)$, and $|\Lambda_{K}|=O(\varepsilon)$, we immediately obtain $(\lambda_{K} K-\Lambda_{K} \cdot x)>0$
		for sufficient small $\va$.
		It follows that $w_{K}\equiv0$. However, it is a contradiction because $w_K\in \mathscr{S}_0$, which in turn
		implies  that $w_{K}>0$.  The proof is finished.
	\end{proof}
	As  illustrated before, we  write  $v=\pi(w, p)=T_{\varphi_{P, t}}^{-1} w$ for any $v \in \mathscr{S}$ with  $w \in \mathscr{S}_{0}$, $p=s P\in B^{n+1}$, $t\geq 1$ and $s=(t-1) / t$.  It is easy to check that  $E_{K}(v)=E_{K \circ \varphi_{P, t}}(w)$. Let us rewrite $E_{K}(v)$ in the $(w, p)$ variables:
	\begin{align}\label{Ip}
	I(w, p):=E_{K}(v)=E_{K \circ \varphi_{P, t}}(w).
	\end{align}
	By Proposition \ref{lem3.6}, for  some $\va_{2}>0$, we have
	\begin{align}\label{3.16}
	\min _{w \in \mathscr{S}_{0},\|w-1\|_{H^{\si}(\Sn)} \leq \varepsilon_{2}} I(w, p)=\min _{w \in \mathscr{S}_{0},\|w-1\|_{H^{\si}(\Sn)} \leq \varepsilon_{2}} E_{K \circ \varphi_{P, t}}(w).
	\end{align}
	Furthermore, if $\|K-1\|_{L^{\infty}(\Sn )} \leq \va_1$,	the minimizer exists and we denote it as $w_{p}$.
	
	Let 	$\va_{2}$ be as in Proposition \ref{lem3.6},	we define
	\begin{align}\label{nn1}
	\mathcal{N}_{1}:=\{w \in \mathscr{S}_{0} :\|w-1\|_{H^{\si }(\Sn )} \leq \varepsilon_{2}\}.
	\end{align}
	For $1<t\leq \infty$, define
	\begin{align*}
	&\mathcal{N}_{2}(t):=\Big\{v \in \mathscr{S}:
	v=\pi (w, p)\, \text { for some }\, w \in \mathcal{N}_{1}\, \text { and }\, p=s P,   \\
	&\quad\quad\quad\quad\quad  P \in \Sn , s=\frac{\zeta-1}{\zeta}, 1 \leq \zeta<{t}\Big\},
	\end{align*}
	and
	\begin{align*}
	\mathcal{N}_{3}(t):=\{v\in H^{\si }(\Sn )\backslash\{0\}:
	cv\in \mathcal{N}_2(t) \text{ for some constant } c>0\}.
	\end{align*}

Using the Proposition \ref{lem3.6} and the properties of the integral equation,
as well as a natural fibration of $H^{\sigma}$, we can obtain the following perturbation result:
	\begin{theorem}\label{prop-theorem3.1}
		Let $n\geq 2$, $0<\si<n/2$, and  $K \in C^{1}(\Sn )$ ($K\in C^{1,1}(\Sn )$
		if $\si \leq 1/2$) be a positive nonconstant function
		and  $\varphi_{P, t}$ be as in \eqref{eq:varphiPt2}  for $P\in \Sn $, $1\leq t<\infty$.
		Suppose that there exists some constant
		$\varepsilon_{3}=\varepsilon_{3}(n) \in(0, \varepsilon_{1})$
		such that
		$\|K-1\|_{L^{\infty}(\Sn )}\leq \va_{3}$.
		Suppose also that for all $P\in \Sn $, we have
		$$\|K \circ \varphi_{P, t}-K(P)\|_{L^{2}(\Sn )}^{2} \leq o\Big(\Big|\int_{\Sn } K \circ \varphi_{P, t}(x) x\Big|\Big)
		\quad \text{ as }\, t\to \infty,$$
		and
		$$
		\operatorname{deg}\Big(\int_{\Sn } K \circ \varphi_{P, t}(x) x, B^{n+1}, 0\Big) \neq 0
		\quad\text{ for large }\, t.
		$$
		Then \eqref{main-eq} has at least one positive solution.
		Furthermore, for any $\alpha \in(0,1)$ satisfying that $\alpha+2 \si $
		is not an integer, there exists  constant $C_{1}>0$ depending only on $n, \alpha, \si ,$
		such that for all $C \geq C_{1}$,
		\begin{align}
		&\mathrm{deg}\Big(v-(P_{\si })^{-1} K|v|^{4 \si  /(n-2 \si )} v, \mathcal{N}_{3}(t) \cap\{v \in C^{2 \si +\alpha}:\|v\|_{C^{2 \si +\alpha}}<C\}, 0\Big) \notag\\
		&\quad=(-1)^{n} \mathrm{deg}\Big(\int_{\Sn } K \circ \varphi_{P, t}(x) x, B^{n+1}, 0\Big).\label{prop-1}
		\end{align}
	\end{theorem}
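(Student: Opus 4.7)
The plan is to reduce \eqref{main-eq} to a finite-dimensional critical point problem on $B^{n+1}$ via the fibration $\pi:\mathscr{S}_0\times B^{n+1}\to\mathscr{S}$, following the Chang--Yang / Schoen--Zhang scheme adapted to the fractional setting (cf.\ \cite{JLXOn2015,LPrescribing1995,ChangYangPrescribing1987}). The functional $E_K$ is strictly convex modulo the constraints in the $\mathscr{S}_0$-direction near $1$ and degenerate in the $B^{n+1}$-direction, so solving for $w$ first and then for $p$ is the natural strategy.

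\emph{Step 1 (fibre minimization).} For each $p=sP\in B^{n+1}$ with $s=(\zeta-1)/\zeta$, $1\le\zeta<\infty$, note that $\|K\circ\varphi_{P,\zeta}-1\|_{L^\infty(\Sn)}=\|K-1\|_{L^\infty(\Sn)}\le\va_3\le\va_1$, so Proposition \ref{lem3.6} produces a unique positive minimizer $w_p\in\mathcal{N}_1$ of $w\mapsto I(w,p)=E_{K\circ\varphi_{P,\zeta}}(w)$, with positive-definite second variation and
\begin{align*}
\|w_p-1\|_{H^\si(\Sn)}\le C\inf_{c\in\mathbb{R}}\|K\circ\varphi_{P,\zeta}-c\|_{L^{2n/(n+2\si)}(\Sn)}.
\end{align*}
The implicit function theorem applied to the constrained Euler--Lagrange system shows that $p\mapsto w_p$ is $C^1$; set $J(p):=I(w_p,p)$.

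\emph{Step 2 (boundary expansion).} Expand $J$ as $\zeta\to\infty$. Taylor expansion at $w=1$ combined with Proposition \ref{lem3.6} gives
\begin{align*}
J(p)=E_{K\circ\varphi_{P,\zeta}}(1)+O\!\left(\|K\circ\varphi_{P,\zeta}-K(P)\|_{L^{2n/(n+2\si)}}^{2}\right),
\end{align*}
and a direct computation from \eqref{EK} combined with the orthogonal decomposition of $K\circ\varphi_{P,\zeta}$ against the first spherical harmonics $\{x_1,\ldots,x_{n+1}\}$ yields
\begin{align*}
E_{K\circ\varphi_{P,\zeta}}(1)=K(P)^{-(n-2\si)/n}P_\si(1)-c_n\,A(P,\zeta)+o(|A(P,\zeta)|),
\end{align*}
where $A(P,\zeta):=\int_{\Sn}K\circ\varphi_{P,\zeta}(x)\,x\,\ud\mathrm{vol}_{g_0}$ and $c_n>0$. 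The hypothesis $\|K\circ\varphi_{P,\zeta}-K(P)\|_{L^2}^{2}=o(|A(P,\zeta)|)$, together with the embedding $L^2\hookrightarrow L^{2n/(n+2\si)}$ on $\Sn$, absorbs the quadratic error, so
\begin{align*}
-\nabla_pJ(p)=c_n'\,A(P,\zeta)+o(|A(P,\zeta)|)\quad\text{as }\zeta\to\infty.
\end{align*}
On the spheres $\{|p|=1-1/\zeta\}$ with $\zeta$ large, $-\nabla_pJ$ is therefore nonvanishing and homotopic through nonvanishing vector fields to $A(P,\zeta)$.

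\emph{Step 3 (existence and degree).} Any critical point $(w_p,p)$ of $J$ on $B^{n+1}$ lifts, by the Lagrange multiplier argument in Proposition \ref{lem3.6}, to a critical point of $E_K$ on $\mathscr{S}$ and hence, after rescaling, to a positive solution of \eqref{main-eq}. The hypothesis $\mathrm{deg}(A(P,t),B^{n+1},0)\neq 0$ combined with Step 2 forces $-\nabla_pJ$ to vanish somewhere in $B^{n+1}$, giving existence. For \eqref{prop-1}, one uses the product structure of the Leray--Schauder degree along the fibration $\pi$: the $\mathscr{S}_0$-contribution is $+1$ because $D^2_wE_{K\circ\varphi_{P,\zeta}}|_{\mathscr{S}_0}(w_p)$ is positive definite, while the $B^{n+1}$-contribution equals $\mathrm{deg}(-\nabla_pJ,B^{n+1},0)$, which by the Step 2 homotopy coincides with $(-1)^{n+1}\mathrm{deg}(A(P,t),B^{n+1},0)$; a further orientation accounting between the ambient $C^{2\si+\alpha}$-topology on $\mathcal{N}_3(t)\cap\{\|v\|_{C^{2\si+\alpha}}<C\}$ and the product structure under $\pi$ produces the final sign $(-1)^n$, exactly as in the bookkeeping of \cite[Section 6]{LPrescribing1995} and \cite[Section 3]{JLXOn2015}. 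A homotopy using compactness from Theorem \ref{pro-lambda-exi} extends the degree from $\mathcal{N}_3(t)$ to the full region $\{\|v\|_{C^{2\si+\alpha}}<C\}$.

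The main obstacle will be the quantitative control in Step 2: showing that the quadratic correction $J(p)-E_{K\circ\varphi_{P,\zeta}}(1)$ is genuinely $o(|A(P,\zeta)|)$ uniformly on collars $\{1-\delta\le|p|<1\}$, and that the leading coefficient $c_n'$ does not vanish in the expansion of $-\nabla_pJ$. This requires converting the $L^{2n/(n+2\si)}$-bound from Proposition \ref{lem3.6} into an $L^2$-bound via Sobolev embedding on $\Sn$, together with the precise hypothesis $\|K\circ\varphi_{P,\zeta}-K(P)\|_{L^2}^2=o(|A(P,\zeta)|)$. A secondary but essential subtlety is the clean orientation accounting that produces exactly $(-1)^n$ in \eqref{prop-1}, rather than an ambiguous $\pm(-1)^n$.
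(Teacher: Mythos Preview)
Your overall strategy---fibre minimization via Proposition \ref{lem3.6}, finite-dimensional reduction to $B^{n+1}$, then a degree argument---matches the paper's. The execution of Step 2, however, has real gaps.

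First, the displayed expansion $E_{K\circ\varphi_{P,\zeta}}(1)=K(P)^{-(n-2\si)/n}P_\si(1)-c_nA(P,\zeta)+o(|A(P,\zeta)|)$ cannot be right as written: the left side is a scalar while $A(P,\zeta)\in\R^{n+1}$. Presumably you intend the $p$-gradient of $E_{K\circ\varphi}(1)$, but then this must be computed directly, not read off from a pointwise expansion of $J(p)$. Having $J(p)=E_{K\circ\varphi}(1)+O(\|K\circ\varphi-K(P)\|^{2})$ pointwise does \emph{not} yield $\nabla_pJ=\nabla_pE_{K\circ\varphi}(1)+o(|A|)$; differentiating an $O(g)$ remainder can produce terms of order $O(\sqrt{g})$, which the hypothesis does not make $o(|A|)$.

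The paper avoids differentiating any asymptotic expansion. It works instead with the Lagrange multiplier $\Lambda_p\in\R^{n+1}$ from the constrained Euler--Lagrange equation of $w_p$ (which vanishes precisely when $w_p$ solves \eqref{main-eq}), and uses the Kazdan--Warner identity to relate $\Lambda_p$ algebraically, through a positive-definite coefficient matrix, to the weighted moment $\mathscr{A}_p=\frac{1}{n}\dashint_{\Sn}\langle\nabla(K\circ\varphi_{P,t}),\nabla x\rangle\,w_p^{2n/(n-2\si)}$. The analytic core is then the estimate $\mathscr{A}_p-\mathscr{B}_p=o(|\mathscr{B}_p|)$ with $\mathscr{B}_p=A(P,t)$, and for $\si\ge1$ the crux is the bound $\|\nabla(w_p^{2n/(n-2\si)})\|_{L^{2}(\Sn)}\le C\|K\circ\varphi_{P,t}-K(P)\|_{L^{2}(\Sn)}$. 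This is an $H^{1}$-type control that does \emph{not} follow from the $H^{\si}$-bound of Proposition \ref{lem3.6} by any Sobolev embedding (note $2n/(n+2\si)<2$, so the embedding goes the wrong way); it is obtained by bounding $\|P_\si(w_p-1)\|_{L^{2}}$ through the equation and the multiplier estimates, and then using that the eigenvalues of $P_\si$ dominate those of $-\Delta_{g_0}$ when $\si\ge1$. This is precisely the point you flag as the ``main obstacle'' but misdiagnose. The relation between $\partial_pI(w_{p_0},p)|_{p=p_0}$ and $\Lambda_{p_0}$ via an invertible matrix with positive determinant (from \cite[Appendix A]{LPrescribing1995}) then supplies the sign bookkeeping. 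Finally, your closing appeal to Theorem \ref{pro-lambda-exi} is misplaced: the degree in \eqref{prop-1} is computed on $\mathcal{N}_3(t)\cap\{\|v\|_{C^{2\si+\alpha}}<C\}$, not on the full $C^{2\si+\alpha}$-ball, and this perturbation theorem carries no flatness hypothesis.
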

	
	\begin{proof}
		We initiate the proof with Proposition \ref{lem3.6} since the case $\si\in(0,1/2)$ follows from a degree argument, see \cite{JLXOn2015}. For each $p\in B_1$, let $w_p$ is the minimizer of \eqref{3.16},  set
		\begin{align*}
		\mathscr{A}_{p}=\frac{1}{n}\dashint_{\Sn }\langle\nabla(K \circ \varphi_{P, t}), \nabla x\rangle w_{p}^{2 n /(n-2 \si)},\quad  \mathscr{B}_{p}=\dashint_{\Sn } K \circ \varphi_{P, t}(x) x.
		\end{align*}
		It is clear that $\mathscr{B}_p \neq 0$. We write
		$$
		\mathscr{A}_p=\mathscr{B}_{p}+\mathrm{I}+\mathrm{II},
		$$
		where
		\begin{align*}
		\mathrm{I}=& \dashint_{\Sn }(K \circ \varphi_{P, t}-K(P)) x(w_{p}^{2 n /(n-2 \si )}-1), \\
		\mathrm{II}=&-\frac{1}{n} \dashint_{\Sn }
		(K \circ \varphi_{P, t}-K(P))\langle\nabla x,
		\nabla(w_{p}^{2 n /(n-2 \si )})\rangle.
		\end{align*}
		The proof consists of 3 steps.
		
		\textbf{Step 1:} Estimates of $\mathrm{I}$.

		By using  Cauchy-Schwartz inequality and \eqref{wk-1},  we have, as $t\to\infty$,
		\begin{align}
		|\mathrm{I}| & \leq C\|K \circ \varphi_{P, t}-K(P)\|_{L^{2}(\Sn )}\|w_{p}^{2 n /(n-2 \si )}-1\|_{L^{2}(\Sn )}\notag \\
		& \leq C\|K \circ \varphi_{P, t}-K(P)\|_{L^{2}(\Sn )}
		\|w_{p}-1\|_{H^{\si}(\Sn)}\notag\\
		& \leq C\|K \circ \varphi_{P, t}-K(P)\|_{L^{2}(\Sn )}^2\notag\\
		&\leq  o(t) \mathscr{B}_{p}  .\label{pro-eq4}
		\end{align}
		
		\textbf{Step 2:} Estimates of $\mathrm{II}$.
		
		Firstly, let us  claim
		that  there exists a constant $C$ depending on $n,\si ,\va_{1}$ such that
		\begin{align}\label{estimates2}
		\|\nabla(w_{p}^{2 n /(n-2 \si )})\|_{L^{2}(\Sn )}
		\leq C\|K\circ \varphi_{P, t}-K(P)\|_{L^2(\Sn )},
		\end{align}	
		here $\va_{1}$ is given by Proposition \ref{lem3.6}. Once we verify it, together with  Cauchy-Schwartz inequality, it will yield that,  as $t\to\infty$,
		\begin{align}
		|\mathrm{II}| \leq C\|K \circ \varphi_{P, t}-K(P)\|_{L^{2}(\Sn )} \|\nabla(w_{p}^{2 n /(n-2 \si )})\|_{L^{2}(\Sn )}
		\leq o(t) \mathscr{B}_{p}.\label{pro-eq5}
		\end{align}
		Now we give the proof of the claim.

		As in \eqref{lem-wk}, we  know that
		\begin{align}\label{eq-wp}
		P_{\si}(w_{p})=(\lambda_p K \circ \varphi_{P, t}-
		\Lambda_{p} \cdot x) w_{p}^{(n+2 \si) /(n-2 \si)} \quad \text { on }\, \Sn ,
		\end{align}
		where
		\begin{align}\label{lamp}
		\lambda_{p}=\frac{\dashint_{\Sn } w_{p} P_{\si }(w_{p})}{\dashint_{\Sn } K \circ \varphi_{P, t} w_{p}^{\frac{2 n}{n-2 \si }}}
		\end{align}
		and $\Lambda_{p}\in \mathbb{R}^{n+1}$. It is easy to see $w_{p}$ solves \eqref{main-eq} if and only if $\Lambda_{p}=0$.
		
		Denote $v_{p}=w_{p}-1$,  using Taylor's theorem and \eqref{eq-wp}, we get
		\begin{align}
		P_{\si}(v_p)=&(\lambda_p K \circ \varphi_{P, t}-
		\Lambda_{p} \cdot x) w_{p}^{(n+2 \si ) /(n-2 \si )}-P_{\si}(1) \notag
		\\
		=&(\lambda_{p}-P_{\si}(1))K \circ \varphi_{P, t}+P_{\si }(1)(K\circ\varphi_{P, t}-1)
		-\Lambda_{p}\cdot x \notag \\
		&+\frac{n+2\si}{n-2\si}(\lambda_p K \circ \varphi_{P, t}-
		\Lambda_{p} \cdot x) v_p \notag  \\
		&+(\lambda_p K \circ \varphi_{P, t}-
		\Lambda_{p} \cdot x)
		o(|v_p|)\notag\\=&	\mathcal{R}(x)+\frac{n+2\si}{n-2\si}\mathcal{Q}(x)v_p+o(|v_p|),\label{eq-wp-2}
		\end{align}
		where
		\begin{align*}
		\mathcal{R}(x)=&(\lambda_{p}-P_{\si}(1))K \circ \varphi_{P, t}+P_{\si }(1)(K\circ\varphi_{P, t}-1)
		-\Lambda_{p}\cdot x,\\
		\mathcal{Q}(x)=&(\lambda_p K \circ \varphi_{P, t}-\Lambda_{p}\cdot x).
		\end{align*}

		By the stereographic projection and Green's representation \eqref{gr},	 we have (up to a  harmless constant)
		\begin{align*}
		\widetilde{v}_p(y)=\int_{\rn}\frac{|J_{F}|^{\frac{2\si}{n}}
			\mathcal{Q}(F(y))
			(\frac{n+2\si}{n-2\si}\widetilde{v}_p+o(|\widetilde{v}_p|))
			+|J_{F}|^{\frac{n-2\si}{2n}} \mathcal{R}(F(y)) }{|z-y|^{n-2\si}}\,\ud z,
		\end{align*}
		where  $\widetilde{v}_p=|J_{F}|^{2n/(n-2\si)}(v_p \circ F)$ as illustrated in the introduction.		We consider
		\begin{align*}
		\widetilde{v}_p(y)=&\int_{B_{3}}
		\frac{\frac{n+2\si}{n-2\si}|J_{F}|^{\frac{2\si}{n}}\mathcal{Q}(F(z))\widetilde{v}_p(y)}{|y-z|^{n-2\si}}\\
		&\quad+\Big(\int_{\rn\backslash B_{3}}
		\frac{\frac{n+2\si}{n-2\si}|J_{F}|^{\frac{2\si}{n}}\mathcal{Q}(F(z))\widetilde{v}_p(y)}{|y-z|^{n-2\si}}
		+\int_{\rn} \frac{|J_{F}|^{\frac{2\si}{n}}\mathcal{Q}(F(z))o(|\widetilde{v}_p|)
			+|J_{F}|^{\frac{n+2\si}{2n}}\mathcal{R}(F(z))}{|z-y|^{n-2\si}}\Big)\\
		=:&\int_{B_{3}}
		\frac{\frac{n+2\si}{n-2\si}|J_{F}|^{\frac{2\si}{n}}\mathcal{Q}(F(z))\widetilde{v}_p(y)}{|y-z|^{n-2\si}}
		+\mathcal{H}(x).
		\end{align*}
		Now we give an upper bound of  $\|\mathcal{H}\|_{L^{\infty}(\rn)}$.

		It follows from Proposition \ref{lem3.6} that
		\begin{align}\label{lamp-p1}
		|\lambda_{p}-P_{\si }(1)|
		=\Big|\frac{\|v_p\|_{H^{\si}(\Sn ) }+2P_{\si}(1)\dashint_{\Sn }v_p  }{\dashint_{\Sn }K\circ \varphi_{P, t}|w_{p}|^{\frac{2n}{n-2\si}} }\Big|
		\leq O (\|K\circ\varphi_{P, t}-1\|_{L^{\infty}(\Sn )}).
		\end{align}
		Multiplying	\eqref{eq-wp} by $\Lambda_{p} \cdot x$ and integrating over both sides we have
		\begin{align}\label{Lamp}
		-\int_{\Sn }
		(\Lambda_{p} \cdot x)^{2}w_{p}^{\frac{n+2\si }{ n-2 \si }}
		=&
		\lambda_{1}\int_{\Sn } w_{p} \Lambda_{p} \cdot x
		-\lambda_{p} \int_{\Sn } K\circ\varphi w_{p}^{\frac{n+2 \si }{ n-2 \si }}
		\Lambda_{p} \cdot x.
		\end{align}
		It is easy to see that
		\begin{align}\label{Lamp-p1}
		|\Lambda_{p}|=O(\|K\circ\varphi_{P, t}-1\|_{L^{\infty}(\Sn )}).
		\end{align}
		Meanwhile, we get from  \eqref{lamp-p1} and \eqref{Lamp-p1} that
		\begin{align}\label{H-infty}
		\|\mathcal{H}\|_{L^{\infty}(\Sn )}\leq O(\|K\circ\varphi_{P, t}-1\|_{L^{\infty}(\Sn )}).
		\end{align}
		
		Thanks to \eqref{H-infty} and \cite[Corollary 2.1]{jlxm}, we obtain
		\begin{align}\label{es-wp-1}
		\|v_{p}\|_{L^{\infty}(\Sn )}\leq O(\|K\circ\varphi_{P, t}-1\|_{L^{\infty}(\Sn )}).
		\end{align}
		Putting the above estimate and \eqref{es-wp-1} together, we find
		\begin{align}\label{Lamp-kp}
		|\Lambda_{p}|\leq C(n,\si)\|K\circ\varphi_{P, t}-K(P)\|_{L^{2}(\Sn )}.
		\end{align}	
		Using  Proposition \ref{lem3.6}, \eqref{es-wp-1} and $\|K-1\|_{L^{\infty}}\leq\va_{3}<\va_{1}$,	we have
		\begin{align}
		&|\lambda_{p}-\frac{P_{\si}(1)}{K(P)}|\notag\\
		\leq&
		C(n,\si,\va_1)
		\Big(\frac{\|w_{p}-1\|_{H^{\si}(\Sn)}+\|w_{p}-1\|_{L^{1}(\Sn )}
			+\dashint_{\Sn }|K\circ \varphi_{P, t}-K(P)|w_{p}^{\frac{2n}{n-2\si}} }
		{\dashint_{\Sn }K\circ \varphi_{P, t}w_{p}^{\frac{2n}{n-2\si}} }\Big)\notag \\
		\leq &C(n,\si,\va_1) \|K\circ \varphi_{P, t}-K(P)\|_{L^2(\Sn )}.\label{lamp-kp}
		\end{align}
		Then, by \eqref{lamp-kp} and \eqref{Lamp-kp}, we have
		\begin{align}
		\|(\lambda_{p} K\circ \varphi_{P, t}-\Lambda_{p}\cdot x)w_{p}^{\frac{n+2\si}{n-2\si}}
		-P_{\si}(1)\|_{L^{2}(\Sn )}
		\leq C(n,\si, \va_{1})\|K\circ \varphi_{P, t}-K(P) \|_{L^{2}(\Sn )}.\label{lampLamp}
		\end{align}
		Combining \eqref{eq-wp-2}, \eqref{lampLamp}, \eqref{psi-eig},	and  the spherical expansion of $w_{p}-1$, we arrive at
		\begin{align*}
		\|w_{p}-1\|_{H^{1}(\Sn )}^2\leq \int_{\Sn } (P_{\si }(w_p-1))^2\leq
		C(n,\si,\va_1) \|K\circ\varphi_{P, t}-K(P)\|_{L^{2}(\Sn )}.
		\end{align*}
		This justifies the  claim.
		
		\textbf{Step 3:} Complete the proof.	

From \eqref{pro-eq4} and \eqref{pro-eq5}, we know that for sufficiently large $t$,
		there exists $0<C_0<1$ such that
		\begin{align*}
		\mathscr{A}_{p}\cdot \mathscr{B}_{p}\geq (1-C_0)|\mathscr{B}_{p}|^2.
		\end{align*}
		As a consequence of the homotopy invariance property of the degree,
		\begin{align*}
		\mathrm{deg}(\mathscr{A}_{p}, B^{n+1}, 0)=
		\mathrm{deg}(\mathscr{B}_{p}, B^{n+1}, 0).
		\end{align*}
		
		We note that $\Lambda_{p}$ can also be computed more directly from the function $K$ as
		follows. In view of \eqref{eq-wp} and the Kazdan-Warner identity (see \cite{jlxm}), we have	
		\begin{align*}
		\int_{\Sn}\langle\nabla( \lambda_{p} K \circ \varphi_{P, t}-\Lambda_{p} \cdot x), \nabla x_i\rangle w_{p}^{\frac{2 n}{n-2\si}} =0, \quad 1 \leq i \leq n+1.
		\end{align*}
		It follows that, for $1 \leq  i \leq  n + 1$,
		\begin{align}\label{57}
		\sum_{j=1}^{n+1} \Lambda_{p}^{j}
		\int_{\Sn }\langle\nabla x_{j}, \nabla x_{i}\rangle w_{p}^{\frac{2 n}{n-2 \si }}=\lambda_{p} \int_{\Sn }\langle\nabla(K \circ \varphi_{P, t}), \nabla x_{i}\rangle w_{p}^{\frac{2 n}{n-2 \si }}.
		\end{align}
		Note that, as $\va_3\rightarrow 0$, by  Proposition \ref{lem3.6}, we have $w_p\rightarrow 1$ uniformly for small $\va_3$. This
		implies that the coefficient matrix on the left hand side of \eqref{57} is positive definite:	
		$$\Big(\int_{\Sn } \langle \nabla x_{i}, \nabla x_{j}\rangle w_{p}^{\frac{2n}{n-2\si }}\Big)_{1\leq i,j\leq n+1}>0.$$
		It follows that, for $t$   large with $ s=(t-1) / t$,
		\begin{align}
		\mathrm{deg}\left(\Lambda_{p}, B_{s}^{n+1}, 0\right)
		=\mathrm{deg}\left(\mathscr{A}_{p}, B^{n+1}, 0\right)
		=\mathrm{deg}\left(\mathscr{B}_{p}, B^{n+1}, 0\right),
		\end{align}
		where $B_{s}^{n+1}$ denotes the open ball in $\mathbb{R}^{n+1}$
		with centered at the origin and $s$ as the radius.	Therefore, $\Lambda_{p}$ has to have a zero inside $B^{n+1}$	which immediately implies that \eqref{main-eq} has at least one positive solution.
		
		Let $I(w,p)$ be as in \eqref{Ip},
		the same argument in \cite[Theorem 3.1]{JLXOn2015} can be applied to obtain that, for any $p_{0} \in B^{n+1}$,
		\begin{align*}
		&\partial_{p} I(w_{p_{0}}, p)|_{p=p_{0}}\\
		=&-\frac{n-2 \si }{n}\Big(\dashint_{\Sn } K (T^{-1}_{\varphi_{p_0}}w_{p_0})^{2 n /(n-2 \si )}\Big)^{\frac{2 \si -n}{n}} \partial_{p}\Big(\dashint_{\Sn } \Lambda_{p_{0}} \cdot \varphi_{p_{0}}^{-1} \circ \varphi_{P, t} w_{p_{0}}^{\frac{2 n}{n-2 \si }}\Big)\Big|_{p=p_{0}}.
		\end{align*}
		By Li \cite[Appendix A]{LPrescribing1995}, we get the matrix
		\begin{align*}
		\partial_{p}\Big(\dashint_{\Sn } \Lambda_{p_{0}} \cdot \varphi_{p_{0}}^{-1} \circ \varphi_{P, t} w_{p_{0}}^{\frac{2 n}{n-2 \si }}\Big)\Big|_{p=p_{0}}
		\end{align*}
		is invertible with positive determinant. Therefore, for
		$t$   large with $ s=(t-1) / t$ , we have
		\begin{align*}
		(-1)^{n+1} \mathrm{deg}(\Lambda_{p}, B_{s}^{n+1}, 0)
		=\mathrm{deg}(\partial_{p} I(w_{p}, p), B_{s}^{n+1}, 0).
		\end{align*}
		The rest of the proof of \eqref{prop-1} is similar to that in \cite[page 386]{LPrescribing1995}
		and we omit them here.
	\end{proof}
	
	\section{Proof of Theorem \ref{them2} and Corollary \ref{cor1}}\label{sec:4}
	In this section, we give the proof of Theorem \ref{them2} and Corollary \ref{cor1}.
	\begin{proof}[Proof of Theorem \ref{them2}] The proof is divided into three steps.
		
		\textbf{Step 1:} Proof of \eqref{th-bound}.
		Suppose the contrary that the solution $v$ to \eqref{main-eq}
		has at least one isolated simple blow up point.
		In the case of $\sharp \mathscr{K}_{n-2\si }^{-} \leq 1$,
		it follows from Theorem \ref{pro-lambda-exi}
		that $v$ has only one blow up point,
		and then  we  obtain from  \cite[Theorem 3.5]{jlxm}
		that there exists a constant $C>0$
		such that
		$$1/C<v<C\quad \text{ on }\, \Sn . $$
		
		In another case,  we assume that $v$ corresponding to \eqref{main-eq} blow up at
		$\{ q^{(1)},\ldots,q^{(k)}\}$ with $k\geq 2$.
		By Theorem \ref{pro-lambda-exi} we know that equation
		\begin{align*}
		M(q^{(1)},\ldots, q^{(k)}, \eta^{(1)},\ldots,\eta^{(k)})
		\begin{pmatrix}
		\lambda_1 \\
		\vdots \\
		\lambda_{k}
		\end{pmatrix}
		=0
		\end{align*}
		has at least one solution $\lambda_{1},\ldots,\lambda_{k}>0$.
		By Cramer's Rule we can deduce a contradiction.
		Therefore, in both cases we prove that \eqref{th-bound} holds.

		\textbf{Step 2:} We now prove the following claim: let $\va_{3}$ be as in
		Proposition \ref{prop-theorem3.1},
		and $\mathcal{N}_{1}$ be as in \eqref{nn1},
		then there exists a constant $\va_{4}>0$ such that, for $0\leq \mu\leq \va_{4}$, we have
		$\|K_{\mu}-1\|_{L^{\infty}(\Sn )}<\va_{3}$, where $K_{\mu}:= \mu K+(1-\mu)$.
		Furthermore, if $v$ is any solution to \eqref{main-eq} with $K=K_{\mu}$,
		and there exists $(w,p)\in \mathscr{S}_{0}\times B^{n+1}$ such that $ v=\pi(w,p)$,
		then $w\in \mathcal{N}_{1}$.
		The proof of the claim  is similar to that in \cite[page 1529]{JLXOn2015}, and
		we omit it here.

		\textbf{Step 3:} Proofs of \eqref{them-eq-1}
		and \eqref{them-eq-2}.
		Equation \eqref{them-eq-1} follows from \cite[Lemma 6.7]{LPrescribing1995}. Next we prove \eqref{them-eq-2}.		

		From the proof of Step 1,
		it is known that there exists some constant $C_{0}>1$ such that
		for all $\va_4\leq \mu<1$,
		$$1/C_0 <v_{\mu}<C_{0}, $$
		where $v_{\mu}$  is any solution of \eqref{main-eq} with $K=K_{\mu}$.
		
		For $\si \in[1,n/2)$, it follows from the homotopy property of the Leray Schauder
		degree and Proposition \ref{prop-theorem3.1} that
		\begin{align*}
		&\mathrm{deg}\Big(v-(P_{\si })^{-1} K v^{\frac{(n+2 \si )}{ (n-2 \si )}}, C^{2 \si +\alpha}(\Sn ) \cap\{1 / C_{0} \leq v_{\mu} \leq C_{0}\}, 0\Big) \\
		=&\mathrm{deg}\Big(v-(P_{\si })^{-1} K_{\varepsilon_{4}} v^{\frac{(n+2 \si )}{(n-2 \si )}}, C^{2 \si +\alpha}(\Sn ) \cap\{1 / C_{0} \leq v_{\mu} \leq C_{0}\}, 0\Big) \\
		=&(-1)^{n} \mathrm{deg}\Big(\int_{\Sn }
		K_{\varepsilon_{4}} \circ \varphi_{P, t}(x) x, B^{n+1}, 0\Big) \\
		=&(-1)^{n} \mathrm{deg}
		\Big(\int_{\Sn } K \circ \varphi_{P, t}(x) x, B^{n+1}, 0\Big).
		\end{align*}
		For $\si \in(0,1)$, Equation \eqref{them-eq-2} follows from \cite[Theorem 3.2]{JLXOn2015}.

		If
		\begin{align*}
		\mathrm{deg}\Big( \int_{\Sn } K\circ \varphi_{P,t}(x)x, B^{n+1},0\Big) \ne 0
		\end{align*}
		for large $t$, then \eqref{main-eq} has at least one solution.
	\end{proof}
	
	\begin{proof}[Proof of Corollary \ref{cor1}]
		Corollary \ref{cor1} follows from  Proposition \ref{prop-theorem3.1} and
		\cite[Lemma 6.7]{LPrescribing1995},
		the proof of Theorem \ref{them2} and \cite[Corollary 6.2]{LPrescribing1995}.
	\end{proof}

	\appendix
	
	\section{Appendix}\label{sec:A}

In this section, we review some results about the blow
up profiles for  integral equations obtained in Jin-Li-Xiong \cite{jlxm}.
For any $x\in \mathbb{R}^{n}$ and $r>0,$
${B}_{r}(x)$ denotes the ball in $\mathbb{R}^{n}$
with radius $r$ and center $x$, and $B_{r}:=B_{r}(0).$

Let $\Omega$ be a domain in $\mathbb{R}^{n}$ and $K_{i}$ are
	nonnegative bounded functions in $\mathbb{R}^{n}.$
	Let $\{\tau_{i}\}_{i=1}^{\infty}$ be a sequence of
	nonnegative constants satisfying $\lim _{i \rightarrow \infty} \tau_{i}=0$, and set
	$$
	p_{i}=\frac{n+2 \si }{n-2 \si }-\tau_{i}.
	$$
	Suppose that $0 \leq u_{i} \in L_{{loc}}^{\infty}(\mathbb{R}^{n})$
	satisfies the nonlinear integral equation
	\be\label{rie}
	u_{i}(x)=\int_{\mathbb{R}^{n}} \frac{K_{i}(y) u_{i}(y)^{p_{i}}}{|x-y|^{n-2 \si }}\, \ud y
	\quad \text { in } \,\Omega.
	\ee
	We assume that $K_{i} \in C^{1}(\Omega)$ $(K_{i}\in C^{1,1}(\Omega )$ if
	$\si \leq 1/2$) and, for some positive constants $A_{1}$ and $A_{2}$,
	\be\label{rk1}
	1 / A_{1} \leq K_{i}, \,
	\|K_{i}\|_{C^{1}(\Omega)} \leq A_{2},\,( \|K_{i}\|_{C^{1,1}(\Omega)} \leq A_{2}\, \text { if }\,\si  \leq \frac{1}{2}).
	\ee

	\begin{proposition}[Pohozaev type identity]\label{pro-pz}
		Let $u \geq 0$ in $\mathbb{R}^{n}$, and $u \in C(\overline{B}_{R})$ be a solution of
		$$
		u(x)=\int_{B_{R}} \frac{K(y) u(y)^{p}}{|x-y|^{n-2 \si }} \,\ud y+h_{R}(x),
		$$
		where $1<p \leq \frac{n+2 \si }{n-2 \si },$ and $h_{R}(x) \in C^{1}(B_{R}),$ $\nabla h_{R} \in L^{1}(B_{R}).$ Then
		\begin{align*}
		&\Big(\frac{n-2 \si }{2}-\frac{n}{p+1}\Big) \int_{B_{R}} K(x) u(x)^{p+1} \,\ud x
		-\frac{1}{p+1} \int_{B_{R}} x \nabla K(x) u(x)^{p+1} \,\ud x \\
		=& \frac{n-2 \si }{2} \int_{B_{R}} K(x) u(x)^{p} h_{R}(x) \,\ud x
		+\int_{B_{R}} x \nabla h_{R}(x) K(x) u(x)^{p} \,\ud x \\
		&-\frac{R}{p+1} \int_{\partial B_{R}} K(x) u(x)^{p+1} \, \ud s.
		\end{align*}
	\end{proposition}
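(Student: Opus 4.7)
The plan is to compute the auxiliary quantity $J := \int_{B_R} K(x)\, u(x)^p\, (x \cdot \nabla u(x))\, \ud x$ in two independent ways and equate the results. For the first computation I would use $u^p \nabla u = (p+1)^{-1}\nabla u^{p+1}$ and apply the divergence theorem to the vector field $(p+1)^{-1}\, x\, K(x)\, u(x)^{p+1}$. From the product rule $\nabla\cdot(xKu^{p+1}) = nKu^{p+1} + (x\cdot\nabla K)u^{p+1} + (p+1)K\, u^p (x\cdot\nabla u)$ and the fact that $x\cdot\nu = R$ on $\partial B_R$, I obtain
$$J = \frac{R}{p+1}\int_{\partial B_R} K\, u^{p+1}\, \ud s - \frac{1}{p+1}\int_{B_R}\bigl(n K + x\cdot\nabla K\bigr) u^{p+1}\, \ud x.$$

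For the second computation I would substitute $u = I + h_R$, where $I(x) := \int_{B_R} K(y)u(y)^p|x-y|^{-(n-2\si)}\, \ud y$, so that $J = J_1 + J_2$ with $J_2 := \int_{B_R} K\, u^p (x\cdot\nabla h_R)\, \ud x$. Differentiating under the integral sign gives
$$x\cdot\nabla I(x) = -(n-2\si)\int_{B_R} K(y)\,u(y)^p\,\frac{x\cdot(x-y)}{|x-y|^{n-2\si+2}}\, \ud y,$$
and Fubini turns $J_1$ into a double integral over $B_R\times B_R$. Relabeling $x\leftrightarrow y$ in that double integral replaces $x\cdot(x-y)$ by $y\cdot(y-x)$; averaging the two expressions and using $(x-y)\cdot(x-y) = |x-y|^2$ collapses the singular factor back to $|x-y|^{-(n-2\si)}$ and produces
$$J_1 = -\frac{n-2\si}{2}\int_{B_R} K\, u^p\, I\, \ud x = -\frac{n-2\si}{2}\int_{B_R} K\, u^{p+1}\, \ud x + \frac{n-2\si}{2}\int_{B_R} K\, u^p\, h_R\, \ud x,$$
where I used $I = u - h_R$. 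Equating the two expressions for $J$ and rearranging gives exactly the claimed identity.

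The main technical point is justifying the differentiation under the integral sign and the Fubini/symmetrization step for the doubly singular integrand defining $J_1$: the base kernel $|x-y|^{-(n-2\si)}$ is already singular, and taking an $x$-derivative introduces an extra factor of $|x-y|^{-1}$. I would handle this by regularizing the kernel, e.g.\ replacing $|x-y|^{n-2\si+2}$ by $(|x-y|^2+\varepsilon^2)^{(n-2\si+2)/2}$, carrying out all manipulations for the smoothed kernel where they are standard, and then passing to the limit $\varepsilon\to 0^+$ via dominated convergence; the limiting integrals are absolutely convergent on the bounded domain $B_R$ because $K$ and $u$ are bounded and $n-2\si<n$. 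The integration by parts in the first computation is similarly legitimate after mollifying $u$ on $\overline{B_{R-\delta}}$ and sending $\delta\to 0^+$, using $u\in C(\overline{B_R})$ together with $\nabla h_R\in L^1(B_R)$ to control the boundary and interior remainders.
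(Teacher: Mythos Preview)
Your proof is correct and is the standard derivation of this Pohozaev-type identity for integral equations: compute $\int_{B_R} K u^p (x\cdot\nabla u)$ once by integration by parts and once by substituting the integral equation for $u$ and symmetrizing the resulting double integral. The paper does not actually supply its own proof of this proposition; it is stated in the Appendix as one of several results quoted from Jin--Li--Xiong \cite{jlxm}, and the argument there is precisely the one you outline.
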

	
	\begin{proposition}\label{pro-ibp-har}
		Suppose that $0 \leq u_{i} \in L_{{loc}}^{\infty}(\mathbb{R}^{n})$ satisfies \eqref{rie}
		with $K_{i}$ satisfying \eqref{rk1}. Suppose that $x_{i} \rightarrow 0$
		is an isolated blow up point of $\{u_{i}\}$, i.e., for some positive
		constants $A_{3}$ and $\bar{r}$ independent of $i$,
		$$
		|x-x_{i}|^{2 \si  /(p_{i}-1)}u_{i}(x) \leq A_{3}\quad \text { for all }\, x \in B_{\bar{r}} \subset \Omega.
		$$
		Then for any $0<r<\bar{r}/3$, we have the following Harnack inequality
		$$
		\sup _{B_{2r}(x_{i}) \backslash \overline{B_{r / 2}(x_{i})}} u_{i}
		\leq C \inf_{B_{2 r}(x_{i})\backslash \overline{B_{r/2}(x_{i})}} u_{i},
		$$
		where $C$ is a positive constant depending only on $\sup_{i}\|K_{i}\|_{L^{\infty}(B_{\bar{r}}(x_{i}))},
		n, \si , \bar{r}$ and $A_{3}.$
	\end{proposition}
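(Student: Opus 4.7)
The plan is to reduce the Harnack inequality to unit scale by a conformal rescaling, and then prove the rescaled Harnack by splitting the Riesz potential into three pieces and controlling the middle piece via the semilinear structure of the equation.

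First, I would set
\[
v_i(z) := r^{2\si/(p_i-1)} u_i(x_i + rz), \qquad \widetilde K_i(z) := K_i(x_i + rz).
\]
A direct change of variables in \eqref{rie} shows that the powers of $r$ cancel because the exponent $p_i$ is near-critical, so $v_i$ satisfies the same form of integral equation
\[
v_i(z) = \int_{\rn} \frac{\widetilde K_i(w)\, v_i(w)^{p_i}}{|z - w|^{n - 2\si}} \, \ud w \quad \text{in } B_{\bar r / r},
\]
with $\widetilde K_i$ still obeying \eqref{rk1} with the same constants $A_1, A_2$. The isolated blow-up bound becomes $v_i(z) \le A_3 |z|^{-2\si/(p_i-1)}$ on $B_{\bar r / r}$, which yields a uniform (in $i$) $L^\infty$ bound on $v_i$ on any closed annulus $\overline{B_R} \setminus B_\rho$ with $0 < \rho < R < \bar r / r$. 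Since $0 < r < \bar r/3$, the annulus $B_3 \setminus \overline{B_{1/5}}$ lies inside this range and serves as a neighbourhood of the target annulus $B_2 \setminus \overline{B_{1/2}}$. The claim reduces to showing $\sup_{B_2 \setminus \overline{B_{1/2}}} v_i \le C \inf_{B_2 \setminus \overline{B_{1/2}}} v_i$ uniformly in $i$.

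For fixed $z_1, z_2 \in B_2 \setminus \overline{B_{1/2}}$, I would split the defining integral for $v_i(z)$ into three pieces: an inner piece $I_1$ over $B_{1/4}$, a middle piece $I_2$ over $B_4 \setminus \overline{B_{1/4}}$, and an exterior piece $I_3$ over $\rn \setminus B_4$. On the inner region, $|z_j - w|$ lies in $[1/4, 9/4]$, and on the exterior $|z_j - w|$ is comparable to $|w|$; hence the kernels $|z_j - w|^{-(n-2\si)}$ ($j = 1, 2$) are pointwise comparable up to a universal constant $C_1 = C_1(n, \si)$ on both regions, which gives $I_1(z_1) + I_3(z_1) \le C_1\bigl(I_1(z_2) + I_3(z_2)\bigr) \le C_1 v_i(z_2)$.

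The main obstacle is the middle contribution $I_2$, whose kernel is singular inside the integration region so the pointwise kernel comparison fails. To treat it, I would use the uniform bound $v_i \le C_0$ on $B_3 \setminus \overline{B_{1/5}}$ to recast the equation on this neighbourhood as the linear fractional elliptic equation $(-\Delta)^{\si} v_i = \widetilde c_i \, v_i$ with the bounded nonnegative coefficient $\widetilde c_i := (\text{const}) \cdot \widetilde K_i v_i^{p_i - 1} \le C_2(n, \si, A_2, A_3, \bar r)$. Since $v_i \ge 0$ on all of $\rn$, the classical interior Harnack inequality for nonnegative solutions of such linear fractional equations with bounded zeroth-order coefficient applies on any ball compactly contained in $B_3 \setminus \overline{B_{1/5}}$; covering $B_2 \setminus \overline{B_{1/2}}$ by a finite (depending only on $n$) chain of such balls and iterating yields the uniform multiplicative comparison $\sup v_i \le C \inf v_i$. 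Unwinding the rescaling $u_i(x) = r^{-2\si/(p_i-1)} v_i((x-x_i)/r)$ recovers the Harnack estimate for $u_i$ on the original annulus, with the constant $C$ depending only on the data $n, \si, \bar r, A_1, A_2, A_3$.
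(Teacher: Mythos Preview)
The paper does not give its own proof of this proposition; it is quoted from Jin--Li--Xiong \cite{jlxm} (see the opening sentence of Appendix~\ref{sec:A}). Your rescaling step and the kernel-comparison treatment of the inner and outer pieces $I_1$, $I_3$ are correct and match the standard route in \cite{jlxm}.

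The gap is in your treatment of the middle piece. You appeal to ``the classical interior Harnack inequality for nonnegative solutions of linear fractional equations with bounded zeroth-order coefficient'', but for $\si>1$ no such classical result exists: higher-order elliptic operators, including $(-\Delta)^\si$ with $\si>1$, do \emph{not} in general satisfy a Harnack inequality, since local positivity is not preserved. Global nonnegativity of $v_i$ together with the integral-equation structure is exactly what saves the situation here, but it must be exploited directly rather than through a PDE reformulation. (A secondary point: once you invoke a full Harnack for $v_i$ on the annulus, the $I_1,I_2,I_3$ decomposition becomes redundant, so the argument as written is also structurally awkward.)

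What \cite{jlxm} does instead is remain entirely in the integral-equation setting. After rescaling and the kernel comparison for the far-field part, one writes
\[
v_i(z)=\int_{\text{local annulus}}\frac{V(w)\,v_i(w)}{|z-w|^{n-2\si}}\,\ud w + h(z),
\]
where $V=\widetilde K_i\,v_i^{\,p_i-1}$ is bounded on the annulus and $h$ (the far contribution $I_1+I_3$) already satisfies a Harnack inequality by your kernel comparison. The Harnack for $v_i$ then follows from a general Harnack-type lemma for such linear Riesz-potential equations with bounded potential $V$ and Harnack-controlled inhomogeneous term $h$, proved via $L^p$ mapping properties of Riesz potentials and iteration. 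This argument is uniform in $\si\in(0,n/2)$ and never passes to $(-\Delta)^\si$; that is the step you are missing.
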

	
	\begin{proposition}\label{pro-ibp-upb}
		Assume the hypotheses  in Proposition \ref{pro-ibp-har}.
		Then for every $R_{i} \rightarrow \infty$, $\varepsilon_{i} \rightarrow 0^{+},$
		we have, after passing to a subsequence (still denoted as $\{u_{i}\},$ $\{x_{i}\},$ etc.), that
		$$
		\|m_{i}^{-1} u_{i}(m_{i}^{-(p_{i}-1) /2\si }\cdot+x_{i})
		-(1+k_{i}|\cdot|^{2})^{(2 \si -n) / 2}\|_{C^{2}(B_{2 R_{i}}(0))} \leq \varepsilon_{i},
		$$
		$$
		r_{i}:=R_{i} m_{i}^{-(p_{i}-1) / 2 \si } \rightarrow 0  \quad \text { as }\, i \rightarrow \infty,
		$$
		where $ m_{i}:=u_{i}(x_{i})$ and $ k_{i}:=({K_{i}(x_{i}) \pi^{n/2}\Gamma(\si )}/{\Gamma(\frac{n}{2}+\si )})^{1/\si }.$
	\end{proposition}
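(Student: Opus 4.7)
\textbf{Proof proposal for Proposition \ref{pro-ibp-upb}.} The plan is the standard rescaling/blow-up argument adapted to the nonlocal integral equation setting, the key inputs being the isolated blow-up bound, the Harnack inequality of Proposition \ref{pro-ibp-har}, and the classification of entire positive solutions to the critical integral equation on $\R^n$. First, I would introduce the rescaled functions
\[
\wdt{u}_{i}(y):=m_{i}^{-1}\,u_{i}\!\bigl(m_{i}^{-(p_{i}-1)/2\si}\,y+x_{i}\bigr),\qquad m_{i}=u_{i}(x_{i}),
\]
which by construction satisfy $\wdt{u}_{i}(0)=1$ and are maximised at the origin on a neighbourhood shrinking at the correct rate. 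A direct change of variables in \eqref{rie} shows that $\wdt{u}_{i}$ solves an integral equation of the same type on a ball of radius $\wdt{R}_i\to\infty$, with coefficient $\wdt{K}_{i}(y)=K_{i}(m_{i}^{-(p_{i}-1)/2\si}y+x_{i})\to K_{\infty}:=\lim_i K_i(x_i)>0$ uniformly on compacts thanks to \eqref{rk1}, and with exponent $p_{i}\to(n+2\si)/(n-2\si)$.

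Next I would establish uniform local bounds for $\wdt{u}_{i}$. The isolated blow-up hypothesis translates, after rescaling, into
\[
\wdt{u}_{i}(y)\le A_{3}\,|y|^{-2\si/(p_{i}-1)}\quad\text{for all } y\in B_{\wdt{r}_{i}},
\]
so $\wdt{u}_{i}$ is locally uniformly bounded on $\R^{n}\setminus\{0\}$. Combining this with Proposition \ref{pro-ibp-har} applied to $u_i$ on annuli $B_{2r}(x_i)\setminus \overline{B_{r/2}(x_i)}$ (rescaled back) gives a uniform upper bound on $\wdt{u}_{i}$ on every compact subset of $\R^{n}$, since $\wdt{u}_{i}(0)=1$. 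Standard regularity for the integral equation (bootstrap using the Riesz potential together with $\wdt{K}_i\in C^1$, resp. $C^{1,1}$ when $\si\le 1/2$) upgrades this to $C^{2}_{\mathrm{loc}}$ equicontinuity, so after passing to a subsequence $\wdt{u}_{i}\to U$ in $C^{2}_{\mathrm{loc}}(\R^{n})$.

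The limit $U\ge 0$ satisfies $U(0)=1$, is maximised at $0$, and solves the critical integral equation
\[
U(x)=K_{\infty}\int_{\R^{n}}\frac{U(y)^{(n+2\si)/(n-2\si)}}{|x-y|^{n-2\si}}\,\ud y\quad\text{on }\R^{n}.
\]
By the Chen--Li--Ou classification of positive solutions, $U$ must be a standard bubble $U(y)=c_{0}\,(1+b|y-y_{0}|^{2})^{(2\si-n)/2}$; the maximum being at $0$ forces $y_{0}=0$, while the normalisation $U(0)=1$ together with the explicit value of the constant
$$
\int_{\R^{n}}\frac{\ud y}{(1+|y|^{2})^{(n+2\si)/2}}=\frac{\pi^{n/2}\Gamma(\si)}{\Gamma(\tfrac{n}{2}+\si)}
$$
fixes $c_{0}=1$ and $b=k_{\infty}:=(K_{\infty}\pi^{n/2}\Gamma(\si)/\Gamma(\tfrac{n}{2}+\si))^{1/\si}$. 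Since $k_{i}\to k_{\infty}$, a diagonal extraction produces sequences $R_{i}\to\infty$ and $\va_{i}\to 0^{+}$ with the claimed $C^{2}$ bound on $B_{2R_{i}}(0)$. Finally, $r_{i}=R_{i}\,m_{i}^{-(p_{i}-1)/2\si}\to 0$ is automatic because $m_{i}\to\infty$ forces $m_{i}^{-(p_{i}-1)/2\si}\to 0$ faster than any polynomial growth of $R_{i}$ (one simply chooses $R_{i}$ to grow slowly enough, e.g.\ $R_{i}=\min\{i,\,m_{i}^{(p_{i}-1)/4\si}\}$).

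The main obstacle is ensuring that the limiting integral equation on $\R^{n}$ is correctly recovered: one must justify passing to the limit under the integral on the full space, which requires a tail estimate using the isolated blow-up decay $\wdt{u}_{i}(y)\lesssim |y|^{-2\si/(p_{i}-1)}$ together with $p_{i}\to (n+2\si)/(n-2\si)$ to show $\wdt{u}_{i}(y)^{p_{i}}/|x-y|^{n-2\si}$ has a uniformly integrable majorant; this is exactly the delicate point where the nonlocal nature of $P_{\si}$ makes the argument more intricate than in the local case.
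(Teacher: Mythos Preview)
The paper does not give its own proof of this proposition: it is quoted in the Appendix as one of several results taken verbatim from Jin--Li--Xiong \cite{jlxm}. Your outline is precisely the standard rescaling/compactness/classification argument used there (rescale by $m_i$, transfer the isolated blow-up bound and the Harnack inequality of Proposition~\ref{pro-ibp-har} to get locally uniform bounds, bootstrap to $C^2_{\mathrm{loc}}$ compactness, pass to the limit, apply the Chen--Li--Ou classification, and extract diagonally), so the approach matches and is correct.

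One small comment on a point you flag yourself: the tail control needed to pass to the limit in the full-space integral is genuinely where the nonlocal setting differs from the local one, but note that the rescaled integral equation for $\wdt u_i$ is valid only for $y$ in the growing ball $B_{\bar r\,m_i^{(p_i-1)/2\si}}$, and the contribution to that integral from the far region (corresponding to $|z-x_i|>\bar r$ in the original variables) is handled not by the decay bound $|y|^{-2\si/(p_i-1)}$ alone but by combining the Harnack inequality on the fixed annulus with the fact that the whole equation is evaluated at a point close to the blow-up center; this yields a term that is $O(m_i^{-1})$ after rescaling. That is the mechanism used in \cite{jlxm}, and once noted the rest of your sketch goes through without change.
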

	
	\begin{proposition}\label{pro-ibp-lowb}
		Under the  hypotheses  of Proposition \ref{pro-ibp-har},
		there exists a positive constant $C=C(n, \si , A_{1}, A_{2}, A_{3})$ such that,
		$$u_{i}(x) \geq C^{-1} m_{i}(1+k_{i} m_{i}^{(p_{i}-1) / \si }|x-x_{i}|^{2})^{(2 \si -n) / 2}\quad
		\text{ for all } \, |x-x_{i}| \leq 1.$$
		In particular, for any $e \in \mathbb{R}^{n},|e|=1$, we have
		$$
		u_{i}(x_{i}+e) \geq C^{-1} m_{i}^{-1+((n-2 \si ) / 2 \si ) \tau_{i}},
		$$
		where $\tau_{i}=(n+2 \si ) /(n-2 \si )-p_{i}$.
	\end{proposition}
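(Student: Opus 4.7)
The plan is to derive the lower bound by comparing $u_i$ with (a constant multiple of) the rescaled standard bubble $m_i(1+k_i m_i^{(p_i-1)/\si}|x-x_i|^2)^{(2\si-n)/2}$. I would split the ball $B_1(x_i)$ into the inner region $\{|x-x_i|\leq r_i\}$, where the $C^2$-approximation in Proposition \ref{pro-ibp-upb} is directly applicable, and the outer region $\{r_i<|x-x_i|\leq 1\}$, where the bubble tail must be recovered from the integral representation \eqref{rie}. The global bound is then obtained by matching the two regional estimates.

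\textbf{Inner region.} Since $m_i^{-1}u_i(m_i^{-(p_i-1)/(2\si)}\cdot+x_i)$ converges in $C^2(B_{2R_i})$ to $(1+k_i|\cdot|^2)^{(2\si-n)/2}$ by Proposition \ref{pro-ibp-upb}, unscaling yields
$$u_i(x)\geq \tfrac{1}{2}\, m_i(1+k_i m_i^{(p_i-1)/\si}|x-x_i|^2)^{(2\si-n)/2}\qquad \text{for } |x-x_i|\leq r_i$$
once $i$ is large enough that $\va_i$ is small. This is exactly the desired bound in the inner region.

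\textbf{Outer region.} For $r_i<|x-x_i|\leq 1$, I would retain only the contribution from $B_{r_i/2}(x_i)$ in \eqref{rie}. On that ball, $|x-y|\leq \tfrac{3}{2}|x-x_i|$, so the kernel is bounded below by $C|x-x_i|^{-(n-2\si)}$. Lower-bounding $u_i$ on $B_{r_i/2}(x_i)$ by the inner estimate and changing variables $z=m_i^{(p_i-1)/(2\si)}(y-x_i)$, the integral becomes
$$C\, K_i(x_i)\, m_i^{\,p_i-n(p_i-1)/(2\si)}\int_{B_{R_i/2}}\bigl(1+k_i|z|^2\bigr)^{p_i(2\si-n)/2}\,\ud z.$$
A short computation gives $p_i-n(p_i-1)/(2\si)=-1+(n-2\si)\tau_i/(2\si)$, and the remaining integral is bounded below by a positive constant uniform in $i$ (the integrand is continuous, positive, and integrated over $B_1\subset B_{R_i/2}$). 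This produces
$$u_i(x)\geq C^{-1} m_i^{-1+((n-2\si)/2\si)\tau_i}|x-x_i|^{-(n-2\si)},$$
which, up to a multiplicative constant, agrees with the bubble tail $m_i(k_i m_i^{(p_i-1)/\si}|x-x_i|^2)^{(2\si-n)/2}$ precisely in the regime $|x-x_i|>r_i$. Combining with the inner bound yields the global estimate, and specializing to $|x-x_i|=1$ produces the second conclusion.

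\textbf{Main obstacle.} The delicate step is ensuring uniformity of constants in $i$: one must verify that $k_i$ stays in a fixed compact subinterval of $(0,\infty)$, which follows from \eqref{rk1} together with the non-degeneracy provided by the $L^\infty$ lower bound on $K_i$; and one must check that the integral $\int_{B_{R_i/2}}(1+k_i|z|^2)^{p_i(2\si-n)/2}\,\ud z$ is uniformly bounded below, which is immediate since $p_i\to (n+2\si)/(n-2\si)$ and $R_i\to\infty$. The other small care-point is the exponent bookkeeping $p_i-n(p_i-1)/(2\si)=-1+(n-2\si)\tau_i/(2\si)$; once this identity is in hand, the matching of inner and outer estimates is routine.
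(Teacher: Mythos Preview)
Your argument is correct. Note, however, that the paper does not give its own proof of this proposition: it is stated in Appendix~\ref{sec:A} as one of several results quoted verbatim from Jin--Li--Xiong \cite{jlxm}, so there is no ``paper's proof'' to compare against beyond the original reference. Your approach---splitting $B_1(x_i)$ into the inner region $\{|x-x_i|\le r_i\}$ where Proposition~\ref{pro-ibp-upb} applies directly, and the outer region where one retains only the integral over $B_{r_i/2}(x_i)$ and uses the inner bound as a sub-solution---is exactly the standard argument used in \cite{jlxm} (and earlier in \cite{LPrescribing1995,LPrescribing1996} for the local case). The exponent identity $p_i-n(p_i-1)/(2\si)=-1+(n-2\si)\tau_i/(2\si)$ and the uniform two-sided bounds on $k_i$ coming from \eqref{rk1} are precisely the ingredients needed, and you have identified them correctly.
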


	\begin{proposition}\label{pro-isbp-upb}
		Under the hypotheses of Proposition \ref{pro-ibp-har} with $\bar{r}=2,$
		and in addition that $x_{i} \rightarrow 0$ is also an isolated simple blow up point with constant $\rho,$
		we have
		$$
		\tau_{i}=O(u_{i}(x_{i})^{-c_{1}+o(1)})\quad \text{and}\quad
		u_{i}(x_{i})^{\tau_{i}}=1+o(1),
		$$
		where $c_{1}=\min \{2,2 /(n-2 \si )\}$.  Moreover,
		$$
		u_{i}(x) \leq C u_{i}^{-1}(x_{i})|x-x_{i}|^{2 \si -n} \quad \text { for all }\,|x-x_{i}| \leq 1.
		$$
	\end{proposition}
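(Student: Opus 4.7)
The plan is to first establish the Schoen-type decay bound $u_i(x)\le C u_i(x_i)^{-1}|x-x_i|^{2\si-n}$ on $B_1(x_i)$, and then feed that bound into the Pohozaev identity of Proposition \ref{pro-pz} to extract a quantitative rate for $\tau_i$. Throughout I set $m_i:=u_i(x_i)$ and write $r=|x-x_i|$; the two estimates turn out to be coupled, with the decay bound doing most of the work and the rate estimate following from careful bookkeeping in the Pohozaev identity.

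For the decay estimate I would split $B_1(x_i)$ into the bubble region $\{r\le r_i\}$ and the exterior region $\{r_i\le r\le 1\}$. On the bubble region, Proposition \ref{pro-ibp-upb} gives the $C^2$ convergence $m_i^{-1}u_i(m_i^{-(p_i-1)/2\si}\cdot+x_i)\to(1+k|\cdot|^2)^{(2\si-n)/2}$, which a direct rescaling converts into $u_i(x)\le C m_i^{-1}r^{2\si-n}$ on $\{r\le 2r_i\}$ after absorbing the polynomial factor in the bubble. On the exterior region I would use the isolated simple assumption: by definition $\overline{w}_i(r)=r^{2\si/(p_i-1)}\overline{u}_i(r)$ has exactly one critical point in $(0,\rho)$, and a computation with the bubble convergence places it at a scale comparable to $r_i$, so $\overline{w}_i$ is monotone non-increasing on $[r_i,\rho]$. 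Combining this monotonicity with the Harnack inequality of Proposition \ref{pro-ibp-har} on dyadic annuli yields $\sup_{|y-x_i|\sim r}u_i\le C\overline{u}_i(r)\le Cr^{-2\si/(p_i-1)}\overline{w}_i(r_i)\le C m_i^{-1}r^{2\si-n}$, where the last inequality comes from matching the bubble scaling at $r=r_i$ and observing that $(p_i-1)/(2\si)\to 2/(n-2\si)$ as $\tau_i\to 0$.

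To extract the rate for $\tau_i$ I would apply Proposition \ref{pro-pz} to $u_i$ on $B_1(x_i)$, taking $h_R(x)=\int_{\mathbb{R}^n\setminus B_1(x_i)}K_i(y)u_i(y)^{p_i}|x-y|^{2\si-n}\,\ud y$. The left-hand coefficient $\frac{n-2\si}{2}-\frac{n}{p_i+1}$ equals a positive multiple of $\tau_i$, so the leading term becomes $c\tau_i\int_{B_1(x_i)}K_iu_i^{p_i+1}\,\ud x$, bounded below by a positive constant from the bubble mass. The remaining integrand $\int(x-x_i)\cdot\nabla K_i\,u_i^{p_i+1}\,\ud x$ is the one that drives the rate: writing $\nabla K_i(x)=\nabla K_i(x_i)+O(|x-x_i|^{\gamma})$ with $\gamma\in(0,1]$ matched to the regularity class of $K_i$, the contribution on $\{r\le r_i\}$ is computed via the bubble and gives $O(m_i^{-2})$, while the contribution on $\{r\ge r_i\}$, estimated with the Schoen decay, is controlled by a tail integral of the form $m_i^{-2}\int_{r_i}^{1}r^{\gamma-(n-2\si)+n-1-2\si}\,\ud r$ whose convergence at $r=0$ produces the exponent $2/(n-2\si)$ (with a logarithmic loss when $n-2\si=1$). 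The boundary term $r\int_{\partial B_1(x_i)}K_iu_i^{p_i+1}\,\ud s$ and the $h_R$-terms are harmless since $h_R=O(m_i^{-1})$ on $B_1(x_i)$ and $u_i=O(m_i^{-1})$ on $\partial B_1(x_i)$. Collecting these bounds yields $\tau_i=O(m_i^{-c_1+o(1)})$ with $c_1=\min(2,2/(n-2\si))$, and the companion identity $m_i^{\tau_i}=1+o(1)$ follows at once from $\tau_i\log m_i=O(m_i^{-c_1+o(1)}\log m_i)\to 0$.

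The main obstacle I anticipate is the exterior estimate of $\int(x-x_i)\cdot\nabla K_iu_i^{p_i+1}$: the crossover between the two values of $c_1$ reflects whether the smoothness of $K_i$ (the value $2$) or the polynomial tail of the bubble (the value $2/(n-2\si)$) dominates, and capturing the $o(1)$ in the exponent requires an almost-sharp control of $r_i$ in terms of $\tau_i$ via the formula $r_i=R_im_i^{-(p_i-1)/2\si}$. Interpolating the bubble bound on $\{r\le r_i\}$ with the Schoen decay on $\{r\ge r_i\}$ at precisely the right exponent, while simultaneously absorbing the error $O(r_i^{\gamma})$ from the Taylor expansion of $K_i$, is where the delicate part of the argument lies. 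Everything else in the proof is routine given Propositions \ref{pro-ibp-har}--\ref{pro-ibp-lowb} and the Pohozaev identity.
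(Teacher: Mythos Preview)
First, a framing remark: in this paper Proposition~\ref{pro-isbp-upb} is placed in the Appendix and is \emph{quoted without proof} from Jin--Li--Xiong \cite{jlxm}. So there is no ``paper's own proof'' to compare against; what one can compare to is the argument in \cite{jlxm}, and there your outline has a genuine gap.

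The problem is in your derivation of the sharp decay $u_i(x)\le C m_i^{-1}|x-x_i|^{2\si-n}$ on the exterior region $r_i\le r\le 1$. The monotonicity of $\overline{w}_i(r)=r^{2\si/(p_i-1)}\overline{u}_i(r)$ only yields
\[
u_i(x)\;\lesssim\;\overline{u}_i(r)\;\le\;r^{-2\si/(p_i-1)}\,\overline{w}_i(r_i),
\]
and since $2\si/(p_i-1)\to(n-2\si)/2$ (not $n-2\si$), this is decay of order roughly $r^{-(n-2\si)/2}$, which is \emph{strictly weaker} than the target $r^{-(n-2\si)}$. Your last displayed inequality, $r^{-2\si/(p_i-1)}\overline{w}_i(r_i)\le C m_i^{-1}r^{2\si-n}$, matches at $r=r_i$ but fails for $r\gg r_i$ because the left side decays more slowly in $r$ than the right side. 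In short, the isolated--simple monotonicity by itself cannot deliver the full Schoen-type bound.

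The argument in \cite{jlxm} is accordingly more involved: one first proves a \emph{preliminary} decay estimate of the form $u_i(x)\le C m_i^{-\lambda_i}|x-x_i|^{2\si-n+\delta_i}$ with $\lambda_i\to 1$ and $\delta_i\to 0$ (obtained by feeding the weak $r^{-(n-2\si)/2}$ bound back into the integral equation and iterating), uses this together with the Pohozaev identity to get a first control $m_i^{\tau_i}=1+o(1)$, and only then upgrades to the sharp $m_i^{-1}r^{2\si-n}$ bound. The decay estimate and the $\tau_i$--rate are thus genuinely \emph{intertwined}, not sequential as in your plan; in particular, you cannot cleanly convert between the exponents $-2\si/(p_i-1)$ and $2\si-n$ without already knowing $\tau_i\log m_i\to 0$. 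Your Pohozaev step is in the right spirit, but it has to be run with the preliminary (non-sharp) decay first, and the missing bootstrapping via the integral representation is the main idea you need to add.
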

	
	\begin{proposition}\label{pro-isbp-lim}
		Under the hypotheses of Proposition \ref{pro-isbp-upb},
		let
		\begin{align*}
		T_{i}(x):=&
		u_{i}(x_{i})
		\int_{B_{1}(x_{i})} \frac{K_{i}(y) u_{i}(y)^{p_{i}}}{|x-y|^{n-2 \si }} \, \mathrm{d} y
		+
		u_{i}(x_{i}) \int_{\mathbb{R}^{n} \backslash B_{1}(x_{i})} \frac{K_{i}(y) u_{i}(y)^{p_{i}}}{|x-y|^{n-2 \si }}\, \mathrm{d} y\\
		=&:T_{i}^{\prime}(x)+T_{i}^{\prime \prime}(x).
		\end{align*}
		Then, after passing to a subsequence,
		$$
		T_{i}^{\prime}(x) \rightarrow a|x|^{2 \si -n}
		\quad \text { in } \, C_{loc}^{2}(B_{1} \backslash\{0\})
		$$
		and
		$$
		T_{i}^{\prime \prime}(x) \rightarrow h(x) \quad \text { in } \, C_{l o c}^{2}(B_{1})
		$$
		for some $h(x) \in C^{2}(B_{2})$, where
		$$
		a=\Big(\frac{\pi^{n / 2} \Gamma(\si )}
		{\Gamma(\frac{n}{2}+\si )}\Big)^{-\frac{n}{2 \si }} \int_{\mathbb{R}^{n}}\Big(\frac{1}{1+|y|^{2}}\Big)^{\frac{n+2 \si }{2}} \mathrm{~d} y \lim _{i \rightarrow \infty} K_{i}(0)^{\frac{2 \si -n}{2 \si }}.
		$$
		Consequently, we have
		$$
		u_{i}(x_{i}) u_{i}(x) \rightarrow a|x|^{2 \si -n}
		+b(x) \quad\text { in } \, C_{l o c}^{2}(B_{1} \backslash\{0\}).
		$$
	\end{proposition}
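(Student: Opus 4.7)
The plan is to exploit the decomposition $T_{i}=T_{i}'+T_{i}''$ given in the statement, which separates the integral according to proximity to the blow-up center $x_{i}$: the inner piece captures the concentrating bubble, the outer piece represents the regular background. By the integral equation \eqref{rie} we have $T_{i}(x)=u_{i}(x_{i})u_{i}(x)$, so the two sub-claims together furnish the consequent assertion with $b(x)=h(x)$. My strategy is to treat $T_{i}'$ by rescaling against the bubble profile from Proposition~\ref{pro-ibp-upb}, and to treat $T_{i}''$ by a dominated-convergence argument valid because the integration variable remains uniformly separated from $x$.

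For $T_{i}'(x)$, I further split $B_{1}(x_{i})$ into the inner ball $B_{r_{i}}(x_{i})$, with $r_{i}=R_{i}m_{i}^{-(p_{i}-1)/(2\si)}\to 0$ from Proposition~\ref{pro-ibp-upb}, and the outer annulus. On the inner ball I perform the change of variables $y=x_{i}+m_{i}^{-(p_{i}-1)/(2\si)}z$ and apply Proposition~\ref{pro-ibp-upb} to replace $u_{i}(y)$ by $m_{i}(1+k_{i}|z|^{2})^{(2\si-n)/2}(1+o(1))$, together with $m_{i}^{\tau_{i}}=1+o(1)$ from Proposition~\ref{pro-isbp-upb}. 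Since $r_{i}\to 0$ and $x_{i}\to 0$, for $x$ in any compact subset of $B_{1}\setminus\{0\}$ we have $|x-y|\to|x|$ uniformly in the inner region and $K_{i}(y)\to K_{\infty}(0)$; the rescaled integrand passes to the limit by dominated convergence, producing $K_{\infty}(0)|x|^{2\si-n}\int_{\mathbb{R}^{n}}(1+k_{\infty}|z|^{2})^{-(n+2\si)/2}\,\ud z$. A rescaling $w=\sqrt{k_{\infty}}z$ together with the defining formula for $k_{\infty}$ matches this with $a|x|^{2\si-n}$. On the outer annulus, the pointwise bound $u_{i}(y)\leq Cm_{i}^{-1}|y-x_{i}|^{2\si-n}$ from Proposition~\ref{pro-isbp-upb} yields a contribution of order $R_{i}^{-2\si}=o(1)$. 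Upgrading pointwise convergence to $C^{2}_{loc}(B_{1}\setminus\{0\})$ is routine: $x$ stays separated from the integration variable, so one differentiates under the integral and repeats the same estimates for derivatives up to order two.

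For $T_{i}''(x)$, the integration region is uniformly separated from every compact subset of $B_{1}$, so the kernel and its $x$-derivatives are uniformly bounded and differentiation under the integral is permitted. The delicate step is to provide a uniform integrable dominator for $u_{i}(x_{i})K_{i}(y)u_{i}(y)^{p_{i}}$: on the annulus $1\leq|y-x_{i}|\leq 2$, Proposition~\ref{pro-isbp-upb} gives $u_{i}(x_{i})u_{i}(y)^{p_{i}}\leq Cm_{i}^{1-p_{i}}|y-x_{i}|^{(2\si-n)p_{i}}\to 0$; for $|y-x_{i}|\geq 2$, global integrability of $K_{i}u_{i}^{p_{i}}$ is forced by the integral equation at $x_{i}$, which yields $\int_{\mathbb{R}^{n}} K_{i}u_{i}^{p_{i}}/|x_{i}-y|^{n-2\si}\,\ud y=m_{i}$. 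Combining this with the identity $K_{\infty}(0)\int_{\mathbb{R}^{n}} U_{\infty}^{(n+2\si)/(n-2\si)}/|z|^{n-2\si}\,\ud z=1$, a consequence of the beta-function evaluation $\int_{\mathbb{R}^{n}}(1+|w|^{2})^{-(n+2\si)/2}/|w|^{n-2\si}\,\ud w=\pi^{n/2}\Gamma(\si)/\Gamma(n/2+\si)$, shows that the inner bubble absorbs the leading $m_{i}$, leaving $T_{i}''$ bounded. An Arzela--Ascoli argument then extracts a subsequence along which $T_{i}''\to h$ in $C^{2}_{loc}(B_{1})$. The main obstacle is precisely this last balance: the boundedness of $T_{i}''$ near $x=0$ hinges on an exact cancellation between the bubble normalization and the total integral of $K_{i}u_{i}^{p_{i}}$ against the Riesz kernel, which requires the combined fine estimates of Propositions~\ref{pro-ibp-upb} and~\ref{pro-isbp-upb}. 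Once secured, $u_{i}(x_{i})u_{i}(x)\to a|x|^{2\si-n}+h(x)$ in $C^{2}_{loc}(B_{1}\setminus\{0\})$ follows immediately from $T_{i}(x)=T_{i}'(x)+T_{i}''(x)$.
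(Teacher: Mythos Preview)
The paper does not prove this proposition; it is quoted in the Appendix from Jin--Li--Xiong \cite{jlxm}. Your treatment of $T_i'$ (split at radius $r_i$, rescale the inner piece against the bubble from Proposition~\ref{pro-ibp-upb}, kill the annulus with the pointwise decay from Proposition~\ref{pro-isbp-upb}) is the standard and correct approach, and your identification of the constant $a$ is right. The serious problem is your boundedness argument for $T_i''$.

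Your ``exact cancellation'' does not deliver what you need. Rescaling the inner integral against the bubble only gives
\[
\int_{|y-x_i|\le 1}\frac{K_i(y)u_i(y)^{p_i}}{|x_i-y|^{n-2\si}}\,\ud y
= m_i\bigl(1+O(\varepsilon_i)+O(R_i^{-n})\bigr),
\]
where $\varepsilon_i$ is the approximation error in Proposition~\ref{pro-ibp-upb}. Subtracting this from the equation $u_i(x_i)=m_i$ therefore yields merely
$\int_{|y-x_i|\ge 1}K_iu_i^{p_i}|x_i-y|^{2\si-n}\,\ud y=o(m_i)$, hence $T_i''(x)=o(m_i^{2})$, not $O(1)$. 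Nothing in the hypotheses forces $\varepsilon_i$ or $R_i^{-n}$ down to size $O(m_i^{-2})$, which your cancellation would require. (A secondary inaccuracy: Proposition~\ref{pro-isbp-upb} states its pointwise bound only for $|y-x_i|\le 1$, not on $1\le|y-x_i|\le 2$; that is patchable by Harnack, but the main gap remains.)

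The fix is short and avoids cancellation entirely. Pick any fixed $x_0$ with $0<|x_0|<1$. Proposition~\ref{pro-isbp-upb} gives $u_i(x_i)u_i(x_0)\le C|x_0-x_i|^{2\si-n}=O(1)$, while your own work shows $T_i'(x_0)\to a|x_0|^{2\si-n}$; hence $T_i''(x_0)=u_i(x_i)u_i(x_0)-T_i'(x_0)=O(1)$. Now observe that for $x$ in any compact $K\subset B_1$ and $|y-x_i|\ge 1$ one has $|x-y|\ge c\,|x_0-y|$ with $c=c(K)>0$, so the nonnegative integrand obeys $T_i''(x)\le CT_i''(x_0)\le C$. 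The same kernel comparison applies to all $x$-derivatives (the integration region is uniformly separated from $K$), giving equicontinuity, and Arzel\`a--Ascoli produces the subsequence with $T_i''\to h\in C^2$. With this correction your overall scheme is sound.
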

	
	\begin{proposition}\label{pro-esti}
		Under the hypotheses of Proposition \ref{pro-ibp-har}, we have
		$$
		\int_{|y-y_{i}| \leq r_{i}}|y-y_{i}|^{s}
		u_{i}(y)^{p_{i}+1}=
		\begin{cases}
		O(u_{i}(y_{i})^{-2 s /(n-2 \si )}), & -n<s<n, \\ O(u_{i}(y_{i})^{-2 n /(n-2 \si )} \log u_{i}(y_{i})), & s=n, \\ o(u_{i}(y_{i})^{-2 n /(n-2 \si )}),
		& s>n,
		\end{cases}
		$$
		and
		$$
		\int_{r_{i}<|y-y_{i}| \leq 1}|y-y_{i}|^{s} u_{i}(y)^{p_{i}+1}=
		\begin{cases}
		o(u_{i}(y_{i})^{-2 s /(n-2 \si )}), & -n<s<n, \\ O
		(u_{i}(y_{i})^{-2 n /(n-2 \si )} \log u_{i}
		(y_{i})), & s=n, \\ O(u_{i}(y_{i})^{-2 n /(n-2\si  )}), & s>n .\end{cases}
		$$
	\end{proposition}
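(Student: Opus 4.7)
The plan is to split each integrand by the natural length scale $r_i = R_i m_i^{-(p_i-1)/(2\sigma)}$: on the bubble region $|y-y_i|\leq r_i$ the size of $u_i$ is governed by the rescaled standard bubble supplied by Proposition~\ref{pro-ibp-upb}, while on the annular region $r_i<|y-y_i|\leq 1$ the decay estimate $u_i(y)\leq C m_i^{-1}|y-y_i|^{2\sigma-n}$ from Proposition~\ref{pro-isbp-upb} is sharp enough to control the integrand pointwise. In both steps the subcritical parameter $\tau_i$ will sit inside every exponent; it is absorbed through the identity $m_i^{\tau_i}=1+o(1)$ of Proposition~\ref{pro-isbp-upb}, so that all $m_i^{O(\tau_i)}$ factors collapse to $1+o(1)$.

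For the first integral I would perform the change of variables $z=m_i^{(p_i-1)/(2\sigma)}(y-y_i)$, which maps $\{|y-y_i|\leq r_i\}$ onto $\{|z|\leq R_i\}$. Inserting the $C^2$ approximation $u_i\approx m_i(1+k_i|z|^2)^{(2\sigma-n)/2}$ on this region and using $(p_i-1)/(2\sigma)=2/(n-2\sigma)-\tau_i/(2\sigma)$ together with $p_i+1=2n/(n-2\sigma)-\tau_i$, a direct bookkeeping gives
\[
\int_{|y-y_i|\leq r_i}|y-y_i|^s u_i(y)^{p_i+1}\,\ud y = m_i^{-\frac{2s}{n-2\sigma}}(1+o(1))\int_{|z|\leq R_i}|z|^s(1+k_i|z|^2)^{-n+O(\tau_i)}\,\ud z.
\]
The reduced radial integrand behaves like $r^{s-n-1+O(\tau_i)}$ at infinity, so splitting into the three ranges $s<n$, $s=n$, $s>n$ yields the stated asymptotics. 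When $s>n$ the inner integral grows like $R_i^{s-n}$; but $R_i = r_i m_i^{2/(n-2\sigma)}(1+o(1))$ together with $r_i\to 0$ forces $R_i^{s-n}m_i^{-2(s-n)/(n-2\sigma)}=r_i^{s-n}(1+o(1))=o(1)$, which is exactly what is needed to convert the crude $O(m_i^{-2s/(n-2\sigma)})$ into $o(m_i^{-2n/(n-2\sigma)})$.

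For the second integral the pointwise bound of Proposition~\ref{pro-isbp-upb} gives the majorant $C m_i^{-(p_i+1)}|y-y_i|^{s+(2\sigma-n)(p_i+1)}$. Passing to polar coordinates, the radial exponent becomes $s-n-1+(n-2\sigma)\tau_i$, so the one-dimensional integral $\int_{r_i}^{1}r^{s-n-1+(n-2\sigma)\tau_i}\,\ud r$ contributes $r_i^{s-n}/(n-s)\cdot(1+o(1))$ when $s<n$, $\log(1/r_i)\cdot(1+o(1))$ when $s=n$, and an $O(1)$ constant when $s>n$. Substituting $r_i=R_i m_i^{-2/(n-2\sigma)+o(1)}$ and $m_i^{-(p_i+1)}=m_i^{-2n/(n-2\sigma)}(1+o(1))$ converts these three outcomes into $o(m_i^{-2s/(n-2\sigma)})$ (using $R_i\to\infty$), $O(m_i^{-2n/(n-2\sigma)}\log m_i)$ (using $\log r_i^{-1}=O(\log m_i)$), and $O(m_i^{-2n/(n-2\sigma)})$, respectively.

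The main technical obstacle is the careful bookkeeping of the $\tau_i$ drifts that enter both the exponent of the bubble weight (through $(2\sigma-n)(p_i+1)/2=-n+(n-2\sigma)\tau_i/2$) and the overall power of $m_i$. A related subtlety is the critical case $s=n$ in the inner integral, which produces a logarithmic divergence whose prefactor must be tracked to read as $\log m_i$ rather than merely $\log R_i$; and the case $s>n$, where closing the estimate relies on the slow growth $R_i=o(m_i^{2/(n-2\sigma)})$ guaranteed by $r_i\to 0$. Once these points are absorbed by the identity $m_i^{\tau_i}=1+o(1)$, the three declared asymptotics follow uniformly in $s$.
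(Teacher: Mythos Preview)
The paper does not give its own proof of this proposition; it appears in Appendix~\ref{sec:A} as one of several results quoted from Jin--Li--Xiong \cite{jlxm}. Your argument is the standard one and matches what is done there: rescale to the bubble on $\{|y-y_i|\le r_i\}$ via Proposition~\ref{pro-ibp-upb}, use the pointwise decay $u_i\le Cm_i^{-1}|y-y_i|^{2\sigma-n}$ on the annulus, and absorb all $m_i^{O(\tau_i)}$ factors through $m_i^{\tau_i}=1+o(1)$.

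One point worth flagging. As written, the proposition is placed ``under the hypotheses of Proposition~\ref{pro-ibp-har}'', i.e.\ merely isolated blow-up. Your treatment of the outer annulus, however, rests on Proposition~\ref{pro-isbp-upb}, which in addition assumes that the blow-up is isolated \emph{simple}; without that extra hypothesis the only available pointwise bound is \eqref{1.57}, which yields $u_i^{p_i+1}\le C|y-y_i|^{-n+O(\tau_i)}$ and is far too weak to recover the stated decay of the outer integral. This is not a flaw in your reasoning but a slip in the appendix's phrasing: every invocation of Proposition~\ref{pro-esti} in Section~\ref{sec:2} occurs after \cite[Theorem~3.3]{jlxm} has already supplied isolated simple blow-up, and that is the regime in which the estimate is proved in \cite{jlxm}. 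With the isolated simple hypothesis in place your proof goes through as written.
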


	\bigskip
	
	\noindent Yan Li
	
	\noindent School of Mathematical Sciences, Beijing Normal University\\ Beijing 100875, China\\
	Email: \textsf{yanli@mail.bnu.edu.cn}
	
	\medskip
	
	\noindent Zhongwei Tang
	
	\noindent School of Mathematical Sciences, Beijing Normal University\\ Beijing 100875, China\\
	Email: \textsf{tangzw@bnu.edu.cn}
	
	\medskip
	\noindent Heming Wang
	
	\noindent School of Mathematical Sciences, Beijing Normal University\\
	Beijing 100875, China\\[1mm]
	Email: \textsf{hmw@mail.bnu.edu.cn}

	\medskip
	\noindent Ning Zhou
	
	\noindent School of Mathematical Sciences, Beijing Normal University \\Beijing 100875, China\\
	Email: \textsf{nzhou@mail.bnu.edu.cn}

\end{document}